\newtheorem{dfn}{Definition}[section]
\newtheorem{lem}{Lemma}[section]
\newtheorem{rem}{Remark}[section]
\newtheorem{hyp}{Hypothesis}
\newtheorem{res}{Result}
\newcommand{\PIN}{\textnormal{{\tiny PIN}}}
\newcommand{\IAA}{\textnormal{{\tiny IAA}}}
\newcommand{\neigh}{\mathcal{N}}
\newcommand{\vy}{\bm{y}}
\newcommand{\vys}{\bm{y}^*}
\newcommand{\va}{\bm{a}}
\newcommand{\vas}{\bm{a}^*}
\newcommand{\vp}{\bm{p}}
\newcommand{\vps}{\bm{p}^*}
\newcommand{\mD}{\bm{D}}
\newcommand{\vnu}{\boldsymbol{\nu}}
\newcommand{\vpi}{\boldsymbol{\pi}}
\newcommand{\vdelta}{\boldsymbol{\delta}}
\newcommand{\vpsi}{\boldsymbol{\psi}}
\newcommand{\vphi}{\boldsymbol{\varphi}}
\newcommand{\veta}{\boldsymbol{\eta}}
\newcommand{\vxi}{\boldsymbol{\xi}}
\newcommand{\R}{\mathbb{R}}
\newcommand*{\etal}{et al.\ }
\DeclareMathOperator{\spec}{Spec}
\begin{document}

\author{
  Delphine Draelants\footnotemark[3]\ \footnotemark[4]\ \footnotemark[1]
  \and
  Daniele Avitabile\footnotemark[3]\ \footnotemark[4]\ \footnotemark[2]
  \and
  Wim Vanroose\footnotemark[1]
}

\title{Localised auxin peaks in concentration-based transport models of
the shoot apical meristem}

\renewcommand{\thefootnote}{\fnsymbol{footnote}}

\footnotetext[3]{These authors contributed equally and should be considered as
joint first authors.}

\footnotetext[4]{Corresponding authors: 
                 \texttt{delphine.draelants@uantwerpen.be},
                 \texttt{Daniele.Avitabile@nottingham.ac.uk} }

\footnotetext[1]{Department of Mathematics and Computer Science, Universiteit
Antwerpen, Middelheimlaan 1, B 2020, Antwerpen}

\footnotetext[2]{Centre for Mathematical Medicine and Biology, School of
Mathematical Sciences, University of Nottingham, University Park, Nottingham,
NG7 2RD, UK}

\renewcommand{\thefootnote}{\arabic{footnote}}

\maketitle

\begin{abstract}
  We study the formation of auxin peaks in a generic class of
  concentration-based auxin transport models, posed on static plant tissues.
  Using standard asymptotic analysis we prove that, on bounded domains, auxin
  peaks are not formed via a Turing instability in the active transport
  parameter, but via simple corrections to the homogeneous steady state. When the
  active transport is small, the geometry of the tissue encodes the
  peaks' amplitude and location: peaks arise where cells have fewer neighbours,
  that is, at the boundary of the domain. We test our theory and perform
  numerical bifurcation analysis on two models which are known to generate auxin
  patterns for biologically plausible parameter values. In the same parameter
  regimes, we find that realistic tissues are capable of generating a
  multitude of stationary patterns, with a variable number of auxin peaks, that
  can be selected by different initial conditions or by quasi-static changes
  in the active transport parameter. The competition between active transport
  and production rate determines whether peaks remain localised or cover the
  entire domain. We relate the occurrence of localised patterns to a
  \textit{snaking} bifurcation structure, which is known to arise in a wide
  variety of nonlinear media but has not yet been reported in plant models.
\end{abstract}

\textit{\footnotesize{\textbf{Keywords:} auxin transport model, auxin
    patterns, localised patterns, snaking, numerical bifurcation
    analysis}}

\section{Introduction}
The hormone auxin plays a crucial role in plant development~\cite{Scarpella2006,
Band2012a, Lavenus2013, deWit2014}, yet the mechanisms through which it
accumulates in certain cells and interacts with cell growth mechanisms
remain largely unclear. The patterns formed during the growth of a plant are
controlled by the local auxin concentration. For example, it
is known that the distribution of auxin maxima in the shoot apex gives
rise to the formation of primordia~\cite{Smith2006,
deReuille2006,Jonsson2006,Kuhlemeier2007, Benkova2003, Heisler2005}.  
Similarly, the distribution of auxin in the root tip
coordinates cell division and cell
expansion~\cite{Grieneisen2007,Jones2008}. In models of root hair
initiation, intra-cellular levels and gradients of auxin concentration
influence the localisation of G-proteins, which in turn promote hair
formation~\cite{Brena2013}. In addition, it is known that the
distribution of auxin in the leaf primordia mediates vascular
patterning~\cite{Scarpella2006}.
In recent years, many aspects of the molecular basis of these mechanisms have been
unraveled and mathematical models of auxin transport have been
proposed to explain growth and development~\cite{Berleth2007, Stoma2008,
Kramer2006, Wabnik2011}.

Computer simulations are often used to compare the model output with
observed data such as auxin distribution, venation patterns, growth or
development. At cellular level, carriers such as PIN-FORMED
(PIN) proteins, which are localised in the cell membrane, determine the
rate and direction of auxin transport. The coordinated activity of
many cells can create peaks of auxin that drive differentiation and
growth.  Various models that implement and refine these ideas have
been proposed~\cite{Smith2006, deReuille2006, Jonsson2006, Grieneisen2007,
Swarup2005, Wabnik2010, Merks2007}. Such models differ primarily in the specifics of
the transport and the coupling to the cell growth and division, but a common feature
is that they generate spatially-extended patterns of auxin concentration,
which have also been observed experimentally.

Existing transport models can be classified into two main categories,
\textit{flux-based} and \textit{concentration-based}, depending on how
auxin influences the localization of transport mediators (PINs) to
form patterns. In flux-based models, first proposed
in~\cite{Mitchison1981}, the polarization depends on the net auxin
flux between neighbouring cells: the higher the net flux towards the
neighbours, the more PIN will accumulate at the membrane, and changes
in the PIN distribution determine changes in auxin fluxes. By
contrast, in concentration-based models it is assumed that the PIN
accumulation on the membrane is caused by differences in auxin
concentration between neighbouring cells. This type of models was
introduced in~\cite{Smith2006} and~\cite{Jonsson2006}. For other reviews on
flux- and concentration-based models we refer the reader to~\cite{Krupinski2010,
Kramer2008, Rolland-Lagan2005,Band2012b}.

In the models mentioned above, patterns are found by direct numerical
simulation, upon choosing control parameters within a plausible
biological range. However, there is still a large uncertainty on many
of the parameter values which are often approximated~\cite{Feugier2005,
Kramer2011}, adopted from different systems~\cite{Steinacher2012} or estimated
with large error margins~\cite{Heisler2006}. Furthermore, it is unclear what is
the effect of systematic parameter variations on the generated patterns and how
this relates to the behaviour of the biological system: understanding the
formation of auxin peaks from a dynamical system standpoint is still an open
problem, therefore a mathematical exploration of the parameter variations may
generate new, experimentally testable hypotheses, thereby gaining insight into
pattern formation mechanisms~\cite{Scarpella2006,
Benkova2003, Bilsborough2011}.

In spite of the uncertainty on experimental parameters, it is believed that
active transport is a key player in auxin
patterning~\cite{Kramer2011}. Transport models posses an
inherent time-scale separation: the growth hormone dynamics involve short time
scales (of the order of seconds) \cite{Brunoud2012}, while changes in cellular
shapes and proliferation of new cells occur on much slower time scales (hours or
days) \cite{Beemster1998}. In order to determine the distribution of auxin in
the plant, it is then possible to concentrate on the fast time scale of the
hormone transport, assume a static cell structure and study the plant tissue as
a dynamical system, subject to variations in the active transport parameter.

In this paper we perform such exploration on concentration-based auxin models,
which are studied using standard bifurcation analysis
techniques~\cite{Krauskopf}. In particular, we find steady states of the 
system and explore their dependence upon control parameters,
investigating how patterns lose or gain stability in response to
parameter changes. The aim is to predict qualitatively the distribution patterns
that can occur for a certain parameter range and to understand transitions
between different types of patterns.

At present, only a few papers regard transport models as dynamical
systems: among them, Reference~\cite{vanberkel2013} stands out for being a
\textit{systematic} analysis of flux- and concentration-based models, whose
auxin patterns are studied by considering local interactions between cells. In
the present paper, we take the analysis one step further, by finding steady
states simultaneously in the whole tissue, and by studying the important effects
of its boundedness on the auxin patterns.

The main result of our analysis is that, in a large class of concentration-based
models posed on finite tissues, peaks do not arise from an instability of the
homogeneous steady state, as it was previously reported for unbounded
tissues~\cite{Smith2006, Jonsson2006, Sahlin2009, Newell2008}: on regular
bounded domains, the geometry of the tissue drives the formation of small auxin
peaks, which nucleate without instabilities
near the boundary.

Further, we investigate the effects of changes in the active transport
coefficient, in the diffusivity coefficient and in the auxin production coefficient for two
specific examples: the concentration-based model proposed by
Smith \etal \cite{Smith2006} and a more recent modification studied by Chitwood
\etal \cite{Chitwood2012}. In these systems the localised peaks, determined
analytically for small values of the active transport coefficient, persist also
for moderate and large values of this control parameter. We found that, owing to
their boundedness, realistic tissues can select from a multitude of patterned
configurations, characterised by a variable number of localised peaks and
organised in a characteristic snaking bifurcation diagram.

Snaking bifurcation diagrams are commonly found for localised states~ 
arising in (systems of) nonlinear partial differential equations posed in
one~\cite{knobloch:08,woods-champneys:99,burke-knobloch:07,
  burke-knobloch:07c,beck-knobloch-etal:09,chapman-kozyreff:09},
two~\cite{lloyd-sandstede-etal:08,avitabile-lloyd-etal:10} and
three~\cite{beaume:13,jacono2013} spatial dimensions, as well as in
discrete~\cite{Taylor2010a} and nonlocal systems~\cite{coombes-lord-etal:03,
laing-troy:03,rankin2014,Avitabile2014}. However, this mechanism has never
been reported for auxin models: solutions with localised peaks undergo a
series of saddle-node bifurcations, giving rise to a hierarchy of steady
states with an increasing number of bumps.
A direct consequence of this mechanism is that the resulting patterns are robust
against changes in the transport parameter and other control parameters as
found, for instance, by Sahlin \etal~\cite{Sahlin2009}. We argue that this
mechanism could be a robust feature in several other types of
concentration-based auxin models.

The paper is structured as follows: in Section~\ref{sec:resultsSummary} we
summarise our working hypotheses and describe the main results of the paper; in
Section~\ref{subsec:TheExistenceOf} we present our calculations for a simple 1D
tissue, giving a primer on bifurcation analysis for auxin models; in
Section~\ref{subsec:2DDom} we generalise our results to 2D tissues, which are
further discussed in Section~\ref{sec:discussion}; finally, we provide a more
formal presentation of our general asymptotic results in
Section~\ref{sec:MaterialsAndMethods}.

\section{Mathematical formulation and summary of the main results}
\label{sec:resultsSummary}
\begin{figure}
  \centering
  \includegraphics[width=\textwidth]{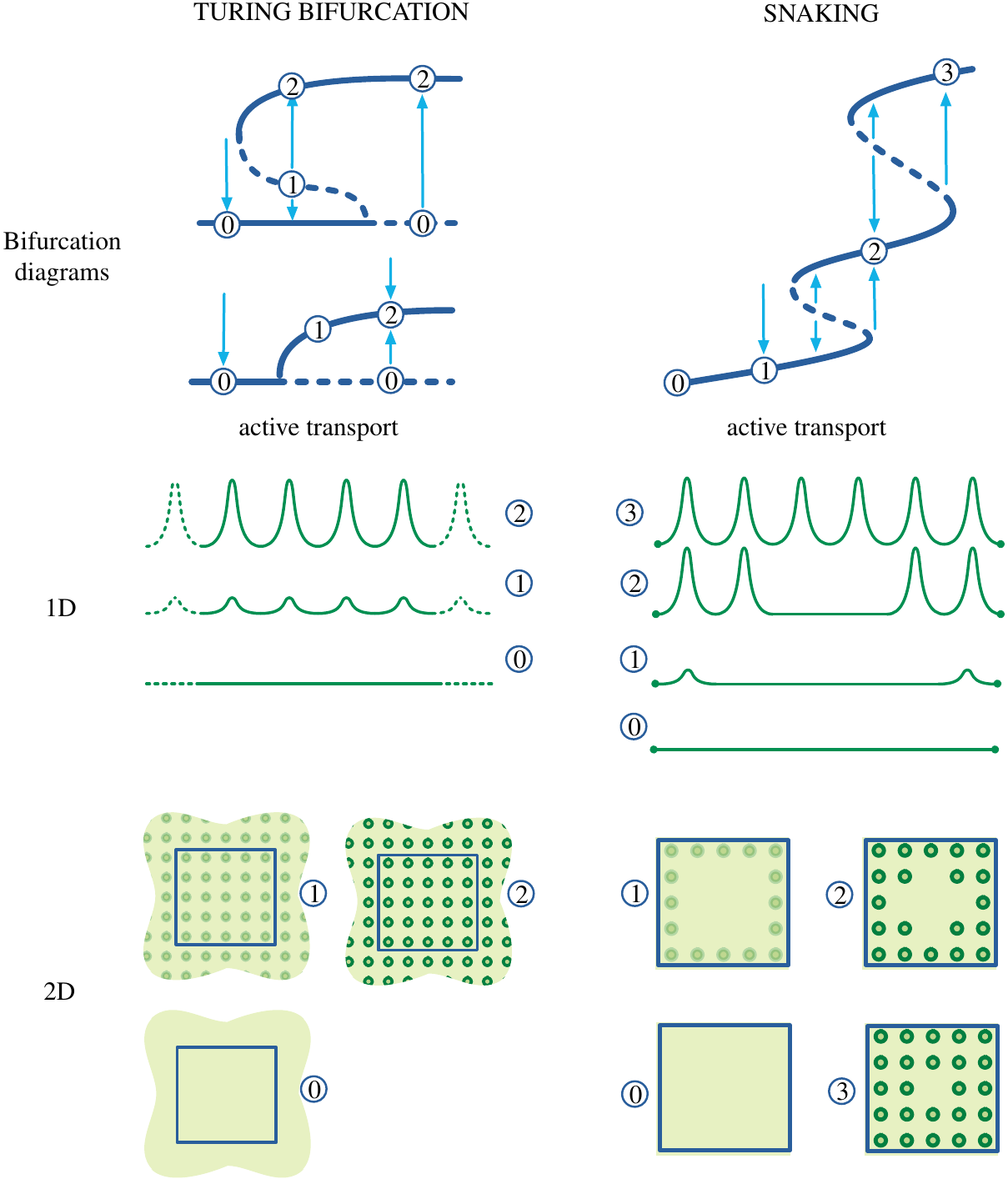}
  \caption{Two mechanisms for the formation of spots, explained with bifurcation
  diagrams (top) and representative patterns in 1D (centre) and 2D (bottom).
  Turing bifurcation (left): on unbounded domains, or bounded domains with
  periodic boundary conditions, the homogeneous state (0) is attracting for low
  values of the active transport parameter $T$ (a blue arrow indicates time
  evolution); when $T$ is increased above a critical value, the flat state
  becomes unstable and the system evolves towards a fully patterned state (2);
  states with low auxin peaks (1) may be attracting or repelling, depending on
  the bifurcation type. Snaking mechanism (right): on bounded domains with free
  boundary conditions, the flat state (0) exists only for $T=0$; upon increasing
  $T$, we find an attracting state with low auxin peaks at the boundary (1),
  followed by several other attracting states with variable number of peaks (2,
  3); the tissue is able to select from different patterns, depending on the
  initial condition and on the value of $T$.}
  \label{fig:sketch1D}
\end{figure}
We begin by giving a generic definition of concentration-based models, and a
summary of the main results of the paper. In concentration-based models, cells
are identified with an index $i \in \{1,\ldots,n\}$ and to each cell it is
associated a set of neighbours $\mathcal{N}_i$, containing $\vert \mathcal{N}_i
\vert$ neighbours, and a vector of $m$ time-dependent state variables
$\vy_i(t)$. For instance, $\vy_i$ may contain the auxin concentration ($m=1$) or
both auxin and PIN-FORMED1 concentrations ($m=2$). Generically, the rate of
change of the concentrations is expressed as a balance between production and
decay within the cell, diffusion towards neighbouring cells and active
transport, hence we study generic concentration-based auxin models of the form
\begin{equation}
\dot{\vy}_i = \vpi(\vy_i) - \vdelta(\vy_i) + \mD \sum_{j \in \neigh_i} (\vy_j - \vy_i)
  + T \sum_{j \in \neigh_i} \vnu_{ji}(\vy_1,\ldots,\vy_n) - \vnu_{ij}(\vy_1,\ldots,\vy_n),
 \label{eq:model}
\end{equation}
where $\vpi, \vdelta$, are the production and decay functions, respectively,
$\mD$ is a diffusion matrix, $T$ is the active transport parameter and
$\vnu_{ij}$ are the active transport functions. In this paper we concentrate on
the fast time scale of hormone transport and hence consider plant organs as
static cell structures, so the number of cells $n$ remains constant in time. We
make two key assumptions:
\begin{hyp}[Regular domains]\label{hyp:domain}
  Cells are arranged in a regular domain, that is, they
  have all the same shape and size and they tessellate the tissue. We do not make any
  assumption on the dimensionality of the domain.
\end{hyp}
\begin{hyp}[Active transport functions]\label{hyp:nu}
The active transport functions can be expressed as
\begin{equation}
 (\vnu_{ij} )_l  = %(\vy_1,\ldots,\vy_n) = 
 \psi_l(\vy_i,\vy_j)
 \frac{
  \varphi_l(\vy_j) }
 {
   \sum_{k \in \neigh_i} \varphi_l (\vy_k)
 }, \quad \textrm{for $l=1,\ldots,m$},
 \label{eq:hyp}
\end{equation}
where the functions $\psi_l$, $\varphi_l$ depend on the model choices.
\end{hyp}

Hypothesis~\ref{hyp:nu} is a factorisation of the active transport functions that is
met by several models in literature~\cite{Smith2006, Jonsson2006, Bayer2009,
Heisler2006, Chitwood2012}: active transport between
cell $i$ and $j$ is influenced by the respective concentrations $\vy_i$ and
$\vy_j$, but also by concentrations in the neighbours of cell $i$.
In Section~\ref{subsec:twomodels} we introduce examples of concrete models
satisfying Hypothesis~\ref{hyp:nu} and in Section~\ref{sec:MaterialsAndMethods}
we derive explicit expressions for the
corresponding functions $\psi_l$, $\varphi_l$.

It is established in literature that concentration-based models are capable of
reproducing auxin patterns that are found in SAM experiments~\cite{Smith2006,
Jonsson2006, deReuille2006, Jonsson2010, Chitwood2012, Sahlin2009}, for
biologically realistic values of the transport parameters. In this paper we will
address the following
questions: What type of patterns are generated by the class of models described
above? Do they all predict the occurrence of auxin peaks? What are the instabilities
that lead to the formation of auxin peaks? Are auxin patterns robust to changes in
the control parameters and initial conditions?

These question have been partially addressed in previous papers~\cite{Smith2006,
Jonsson2006, Chitwood2012, Sahlin2009, deReuille2006, Farcot2013, vanberkel2013},
where analytical results have been obtained only for particular models and only for
regular domains without boundaries, where all cells have the same number of
neighbours. In such domains a homogeneous steady state $\vy_i = \vy^*$, satisfying
the balancing condition $\vpi(\vy_i) = \vdelta(\vy_i)$, is known to exist for all
values of $T$~\cite{Smith2006, deReuille2006, Jonsson2006, Chitwood2012, Sahlin2009,
Farcot2013, Newell2008}. In domains without boundaries, the formation of peaks
has been explained in terms of a Turing bifurcation in the active transport parameter
(see Figure~\ref{fig:sketch1D}): peaks are formed all at once as $T$ is varied, and
they derive from an instability of the homogeneous steady state. From a dynamical
system viewpoint, however, we expect that boundary conditions and finite domain sizes
affect the formation and
selection of auxin patterns. The main result of our investigation is that the
mechanism for the formation of peaks is radically different in tissues of finite
size, and in particular:
\begin{res}[Homogeneous steady states]\label{res:HomState}
  In finite domains, concentration-based models~\eqref{eq:model} satisfying
  Hypotheses~\ref{hyp:domain}--\ref{hyp:nu} support a
  homogeneous equilibrium $\vy_i = \vy^*$ for $T=0$ but, generically, this
  homogeneous state is not present when $T\neq 0$. This result is a direct
  consequence of the geometry of the domain: an inspection of
  Equation~\eqref{eq:hyp} shows that the active transport terms $\vnu_{ij}$
  contain a nested sum over the neighbours of the cell $i$ and, in a finite
  domain, the number of neighbours varies from cell to cell, namely cells at the
  boundary of the regular tissue have fewer neighbours than cells in the
  interior; consequently, 
  $\vnu_{ij} - \vnu_{ji}$ is generally different from $\mathbf{0}$ at the
  boundary, even when $\vy_i = \vy^*$. The conclusion is
  that, on finite domains, peaks can not form with a Turing bifurcation, since
  the homogeneous steady state exists for $T=0$, but not for $T\neq 0$ (however
  small). For regular domains where the number of neighbours is the same for all
  cells, the Turing mechanism is still possible. In addition, a Turing
  bifurcation is also possible in any domain, provided that $T=0$ and diffusion
  is used as a bifurcation parameter. In this regime, however, the active
  transport is absent and the tissue is a standard medium with
  reaction-diffusion mechanisms, which is not a biologically valid hypothesis
  for auxin models.
\end{res}
\begin{res}[Origin of auxin peaks]\label{res:smallT}
  To understand how peaks are formed from the homogeneous state
  we study the case of small active transport coefficients. For $0 < T \ll 1$ and in
  the absence of passive transport, $\mD=\mathbf{0}$, the models above
  generate steady states with
  small peaks. Such states take the form
  \begin{equation}
    \vy_i = \vy^* 
          +  T \xi_i \big[\vpi'(\vy^*) - \vdelta'(\vy^*)\big]^{-1}
          \vpsi(\vy^*,\vy^*) + \mathcal{O}(T^2)
  \label{eq:pert}
  \end{equation}
  where the coefficients $\xi_i$ depend on the number of neighbours at
  distance 2 from the $i$th cell\footnote{Neighbours at distance 2 from a cell
  $i$ are neighbours of the neighbours of cell $i$.}, namely
  \[
     \xi_i = 1-\sum_{j \in \neigh_i} \frac{1}{\vert \neigh_j \vert}.
  \]
  Equation~\eqref{eq:pert} predicts that peaks are formed as small perturbations
  of the homogeneous steady state (see Figure~\ref{fig:sketch1D}).
  The amplitude of small peaks is proportional to $T$ and $\xi_i$. Importantly,
  $\xi_i=0$ in the interior of
  regular domains, therefore peaks localise at the boundary and without
  bifurcations, as opposed to the Turing scenario where they form everywhere
  owing to an instability of $\vy^*$: the mechanism on finite domains is purely
  geometric, as the location of the peaks is determined by the factors $\xi_i$.
\end{res}
\begin{res}[Effect of passive transport]\label{res:diffusion}
  For small active transport and at the presence of passive transport,
  $\mD \neq \mathbf{0}$,
  we still obtain solutions with localised peaks, similar to the case discussed
  above. The location of the peaks depends again on $\xi_1, \ldots, \xi_n$.
\end{res}

\begin{rem}[Applicability of analytical results]\label{rem:generality}
  Results~\ref{res:HomState}--\ref{res:diffusion} are valid for generic models of
  the form~\eqref{eq:model}, provided they satisfy
  Hypotheses~\ref{hyp:domain}--\ref{hyp:nu}. In particular, these results are
  valid for regular cellular arrays in any spatial dimension. The main implication
  of this result is that a wide class of concentration-based models are able to
  generate spontaneously auxin peaks in various geometries, irrespective of the
  model specifics. A similar derivation can be done for irregular domains,
  albeit the coefficients $\xi_i$ depend in this case on the local cellular
  volumes as well as the number of neighbours. 
\end{rem}

The analytical theory described above, which is presented in more detail in
Section~\ref{sec:MaterialsAndMethods}, is valid only for small values of the active
transport coefficient and does not explain the formation of peaks in the interior of
the domain~\cite{deReuille2006, Sahlin2009, Jonsson2012}. While it is difficult to
make general analytical predictions for larger values of $T$, it is possible to
explore the solution landscape of specific models via numerical methods. We
investigated regular domains in two concrete models by Smith
\etal~\cite{Smith2006} and Chitwood \etal~\cite{Chitwood2012} 
(henceforth called the Smith model and the Chitwood model, respectively)
which satisfy Hypotheses~\ref{hyp:domain}--\ref{hyp:nu} and will be described in
detail in Section~\ref{subsec:twomodels}. Our computations confirm the
theoretical Results~\ref{res:HomState}--\ref{res:diffusion} and provide
numerical evidence for the following conclusions:

\begin{res}[Formation of stable auxin spots in the interior]\label{res:snaking}
   As the active transport rises, the two models by Smith and Chitwood predict
   the formation of peaks in the interior. Peaks are formed
   progressively, from the boundary towards the interior via saddle-node
   bifurcations, with a characteristic snaking bifurcation diagram. From a
   biological perspective, this means that the tissue can form peaks that are
   robust with respect to changes in the control parameters. In addition, the
   tissue is capable of selecting from a variety of auxin patterns, with a variable
   number of spots, depending on the initial conditions and on the value of the auxin
   transport parameter $T$ (see Figure~\ref{fig:sketch1D}).
\end{res}
\begin{res}[Robustness of the snaking mechanism]\label{res:robustness}
   The scenario above is robust to perturbations to secondary parameters, that
   is, solutions with localised peaks at the boundary should be observable in
   experiments for which the two models are applicable, if the active transport
   is inhibited with respect to passive transport. 
\end{res}

Furthermore, in numerical calculations we are able to violate
Hypotheses~\ref{hyp:domain}--\ref{hyp:nu} and see how this affects our results.
An important conclusion is the following:

\begin{res}[Irregular domains]\label{res:irregular}
  When the Smith and the Chitwood models are posed on irregular
  domains, the bifurcation structure presented above persists. Auxin peaks are
  formed progressively via saddle-node bifurcations, albeit they may in principle
  nucleate in the interior of the domain before the boundary is filled.
  Furthermore, this scenario is robust to changes in secondary parameters, so
  Result~\ref{res:robustness} is still valid on irregular domains for both
  models.
\end{res}

\subsection{Two models of auxin transport}
\label{subsec:twomodels}

We now present two models that will be analysed in detail using the asymptotic
theory of Section~\ref{sec:MaterialsAndMethods} and numerical simulations.
As a first example we consider the Smith model~\cite{Smith2006}, which features
2 state variables per cell, namely the indole-3-acetic acid (IAA)
concentration, $a_i(t)$, and the PIN-FORMED1 (PIN1) amount, $p_i(t)$. The model
features IAA production, decay, active and passive transport terms, whereas for
PIN1 only production and decay are included.
This results in the
following set of coupled nonlinear ODEs
\begin{align}
  \begin{split}
  \frac{da_i}{dt} =& 
		    \frac{\rho_\IAA}{1+\kappa_\IAA a_i} - \mu_\IAA a_i + \frac{D}{V_i}
		    \sum_{j \in \mathcal{N}_i} l_{ij} \big( a_j-a_i \big)
                   \\
                 & + \frac{T}{V_i}
		 \sum_{j \in \mathcal{N}_i} 
		   \Bigg[ 
		      P_{ji}(\va,\vp) \frac{a_j^2}{1+\kappa_T a_i^2} - 
		      P_{ij}(\va,\vp) \frac{a_i^2}{1+\kappa_T a_j^2}
		   \Bigg],
  \end{split}
  \label{eq:SmithModel1}
  \\
  \frac{dp_i}{dt} =& \frac{\rho_{\PIN_0} + \rho_\PIN a_i}{1+\kappa_\PIN p_i} - \mu_\PIN p_i,
  \label{eq:SmithModel2}
\end{align}
for $i=1,\ldots,n$. In this model $D$ is a diffusion coefficient, $V_i$ is the
cellular volume, $l_{ij}=S_{ij}/(W_i + W_j)$ is the ratio between
the contact area $S_{ij}$ of the adjacent cells $i$ and $j$, and the sum of the
corresponding cellular wall thicknesses $W_i$ and $W_j$.
In addition, $T$ is the active transport
coefficient and $P_{ij}$ is the number of PIN1 proteins on the
cellular membrane of cell $i$ facing cell $j$,
\begin{equation}
  P_{ij}(\va,\vp) = p_i 
	            \frac{ l_{ij} \exp{(c_1 a_j)} }{
		    \sum_{k \in \mathcal{N}_i}l_{ik} \exp{(c_1 a_k)}}.
  \label{eq:SmithModel3}
\end{equation}
The Smith model posed on a regular domain satisfies 
Hypotheses~\ref{hyp:domain}--\ref{hyp:nu} in
Section~\ref{sec:resultsSummary} and the reader can find explicit expressions
for the functions $\varphi_l$, $\psi_l$ in Section~\ref{sec:one-dimensional-2-comp}.
More details on the model and simulations of realistic phyllotactic patterns can
be found in~\cite{Smith2006}.

The second concentration-based transport model that will be studied
below is the more recent Chitwood model~\cite{Chitwood2012}.
This modification of the Smith model is able to produce stable spiral
phyllotactic patterns once cell division is included. The system also features 2
variables per cell, the IAA concentration and the PIN1
amount, and it is given by the following set of coupled nonlinear ODEs
\begin{align}
  \begin{split}
  \frac{da_i}{dt} =& 
		    \frac{\rho_\IAA}{1+\kappa_\IAA a_i} - \mu_\IAA a_i + \frac{D}{V_i}
		    \sum_{j \in \mathcal{N}_i} l_{ij} \big( a_j-a_i \big)
                   \\
                 & + \frac{T}{V_i}
		 \sum_{j \in \mathcal{N}_i} 
		   \Bigg[ 
		       P_{ji}(\va,\vp)
		       \frac{\exp{(c_2 a_j)}-1}{\exp{(c_2 a_i)}}
		       - P_{ij}(\va,\vp)
		       \frac{\exp{(c_2 a_i)}-1}{\exp{(c_2 a_j)}}
		   \Bigg],
  \end{split}
  \label{eq:ChitwoodModel1}
  \\
  \frac{dp_i}{dt} =& \frac{\rho_{\PIN_0} + \rho_\PIN a_i}{1+\kappa_\PIN p_i} -
  \mu_\PIN p_i, \label{eq:ChitwoodModel2}
\end{align}
for $i=1,\ldots,n$, where $P_{ij}$ are given by~\eqref{eq:SmithModel3} and the
only new parameter, $c_2$, controls the exponential transport. 
The Chitwood  model posed on a regular domain also satisfies
Hypotheses~\ref{hyp:domain}--\ref{hyp:nu}, as shown in
Section~\ref{sec:one-dimensional-2-comp}.
 
In the reminder of the paper we shall fix most parameters in the Smith 
and the Chitwood models, and we will examine variations in
the active transport parameter
$T$, auxin diffusion coefficient $D$ and auxin production coefficient
$\rho_\IAA$. In Table \ref{table:parameters} we report a brief description of
parameters for both models, together with characteristic values and units, which
are taken from~\cite{Smith2006,Chitwood2012}. 

\begin{table}
  \centering
  \caption{Control parameters for the Smith and Chitwood
  models (parameter values are taken from~\cite{Smith2006} and
  \cite{Chitwood2012}). We examine variations in $T$, $D$ and $\rho_{_{\IAA}}$, for
  which we report a range of values in the second part of the table.}
\begin{tabular}{cllccc}
  \toprule
\multirow{2}{*}{Symbol} & \multirow{2}{*}{Description}   & \multirow{2}{*}{Domain} & \multicolumn{2}{c}{Value} & \multirow{2}{*}{Unit} \\
       & 						 &                         & \multicolumn{1}{r}{Smith \etal} & Chitwood \etal & \\
  \midrule
$c_1$              & PIN distribution               &           & $1.099$ & $1.099$ & 1/$\mu$M     \\
$\kappa_{_{\PIN}}$ & PIN saturation                 &           & $1$ & $1$ & 1/$\mu$M     \\
$\kappa_T$         & Transport saturation           &           & $1$ &         &              \\
$c_2$              & Exponential transport          & 2D reg.   &         & $0.588$ & 1/$\mu$M     \\
	       	   &                                & 2D irreg. &         & $0.405$ & 1/$\mu$M     \\
$\kappa_{_{\IAA}}$ & IAA saturation                 &           & $1$ & $1$ & 1/$\mu$M     \\
$\rho_{_{\PIN_0}}$ & PIN base production            &           & $0$ & $0$ & $\mu$M/h     \\
$\rho_{_{\PIN}}$   & PIN production                 &           & $1$ & $1$ & 1/h          \\
$\mu_{_{\PIN}}$    & PIN decay                      &           & $0.1$ & $0.1$ & 1/h          \\
$\mu_{_{\IAA}}$    & IAA decay                      &           & $0.1$ & $0.1$ & 1/h          \\
  \midrule
$\rho_{_{\IAA}}$   & IAA production                 & 			& $\left[0.3 , 1.5\right]$ & $\left[0.7 , 2.0\right]$  & $\mu$M/h     \\
$D$                & IAA diffusion                  &           & $\left[0 , 1\right]$ & $\left[0 , 1\right]$ & $\mu$m$^2$/h \\
$T$                & IAA transport coefficient      & 1D reg.   & $\left[0 , 6\right]$ & & $\mu$m$^3$/h \\
                   &                                & 2D reg.   & $\left[0 , 2\right]$ & $\left[0 , 2.5\right]$ & $\mu$m$^3$/h\\ 
                   &                                     & 2D irreg. & $\left[0, 120\right]$ & $\left[0, 95\right]$ & $\mu$m$^3$/h\\
%$V$                      & Cell volume                         & 1D reg.   & $1.000$ & $     $ & $\mu$m$^3$   \\
%		         &                                     & 2D reg.   & $     $ & $     $ & $\mu$m$^3$   \\
%$\langle V_i \rangle$    & Average cell volume                 & 2D irreg. & $     $ & $     $ & $\mu$m$^3$   \\
%$l$                      & Contact area/wall thickness         & 1D reg.   & $     $ & $     $ & $\mu$m$ $   \\
%		         &                                     & 2D reg.   & $     $ & $     $ & $\mu$m$ $   \\        
%$\langle l_{ij} \rangle$ & Average contact area/wall thickness & 2D irreg. & $     $ & $     $ & $\mu$m$ $   \\
\bottomrule
\end{tabular}
\label{table:parameters}
\end{table}

\begin{rem}[Comparison with experimental parameters]\label{rem:pars}
  The model equations~\ref{eq:SmithModel1}--\ref{eq:ChitwoodModel2} show that
  the transport parameters $T$ and $D$ are scaled by the cellular
  volumes $V_i$: comparisons to experimental parameters and other computer
  simulations available in literature should be based on the ratios $T/V_i$ and
  $D/V_i$. Through these ratios we implicitly specify $T/D$, so as to account
  for competition between active and passive transports.
\end{rem}

\begin{rem}[Tissue types]\label{rem:tissueType}
  We model 3 plant tissues: 
  identical cubic cells arranged on a line of finite length (here and
  henceforth, 1D regular), identical hexagonal prismic cells tessellating a
  finite square (2D regular)  and irregular prismic cells tessellating an
  almost-circular domain (2D irregular, taken from Merks \etal~\cite{Merks2011}). We
  stress that the nomenclature 1D and 2D refers to the domain, not the cells,
  which are assumed to have consistently assigned volumes and contact areas. We
  note that cellular volumes and contact areas may change between different domains
  (see also Remark~\ref{rem:pars}).
\end{rem}

\section{Results}
\label{sec:Results}

\subsection{A primer on the formation of auxin peaks - a 1D regular tissue}
\label{subsec:TheExistenceOf}
\begin{figure}
  \centering
  \includegraphics{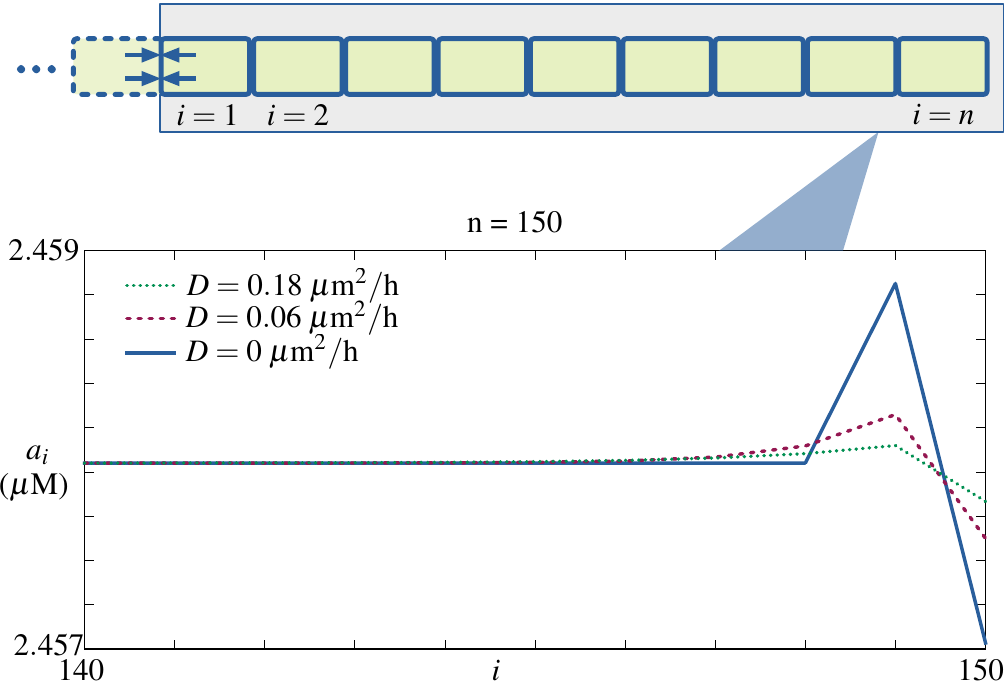}
  \caption{Approximate solution pattern in the 1D regular tissue. Top: geometry
  of a one-dimensional cellular array of identical cells; there exist cells to
  the left of $i=1$, but the next flux with cell $1$ is 0; at $i=n$ we have a
  physical boundary, therefore no cells exist to the right of cell $n$ and
  $\mathcal{N}_n = n-1$. Bottom: predicted approximate solution pattern $a^* + T
  \alpha_i$ in the proximity of the boundary $i=n$ for the Smith model
  with $n=150$ and various values of the diffusion coefficient. Parameters: $T =3
  \cdot 10^{-5}\mu\textrm{m}^3/\textrm{h}$, $D= 1\,\mu \textrm{m}^2/\textrm{h}$,
  $\rho_\IAA = 0.85 \, \mu\textrm{M}/\textrm{h}$
  other parameters as in Table \ref{table:parameters}.
  }
  \label{fig:FiniteDomainWithDiffusion}
\end{figure}

In this section, we present in detail
Results~\ref{res:HomState}--\ref{res:robustness} for the Smith model
posed on a 1D regular tissue, which is represented in
Figure~\ref{fig:FiniteDomainWithDiffusion}.
The tissue consists of a file of $n$ identical cubic cells with volume $V=1
\mu\textrm{m}^3$ and  $l_{ij}=1 \, \mu\textrm{m}$. We assume that there exist cells
to the left of cell $1$ and that the net proximal flux is zero, hence we prescribe
Neumann boundary conditions at $i=1$. We impose that there are no cells to the
right of cell $n$, so we use a free boundary condition\footnote{The free
boundary condition at $i=n$ should not be confused
with a zero Dirichlet boundary condition, for which we would prescribe
$\mathcal{N}_n=\{n-1,n+1\}$ and $\vy_{n+1}(t) \equiv 0$.} 
at $i=n$ by setting $\mathcal{N}_n=n-1$. In this geometry, the tissue has a physical
boundary only at $i=n$, as illustrated in Figure~\ref{fig:FiniteDomainWithDiffusion}, so each cell has
$2$ neighbours, except cell $n$, which has only $1$ neighbour. 
This geometry (with periodic boundary conditions) has
been studied in other auxin-patterning simulations: it was used
in reference~\cite{Farcot2013} as an approximation of a root tissue and
in reference~\cite{Draelants2013} to model part of a leaf, between the midvein
and the margin.
In this paper, we use this geometry only as a primer to illustrate 
how the asymptotic and numerical calculations are used to predict the
formation of auxin spots in concentration-based models. 
While the focus is on the Smith  model, several of
the results presented in this section, are valid in more general models and
spatial configurations (we refer the reader to Remark~\ref{rem:generality} and the
whole Section~\ref{sec:resultsSummary} for further comments on the generality of
these results).

\subsubsection{From homogeneous to patterned solutions}
\label{subsubsec:AFiniteOneDimensionalDomainWithoutDiffusion}
The Smith model posed on a regular domain satisfies 
Hypotheses~\ref{hyp:domain}--\ref{hyp:nu} (as shown in
Section~\ref{sec:one-dimensional-2-comp}), so we can apply the asymptotic theory in
Section~\ref{sec:MaterialsAndMethods} (in particular, Lemma~\ref{lemma1}).
On an unbounded array (or on a bounded array with periodic boundary conditions) every
cell has the same number of neighbours, therefore the model admits the following
homogeneous steady state
\begin{equation}
  \begin{aligned}
    & a_i = a^* = 
    \frac{-1 + \sqrt{1+4\kappa_\IAA \rho_\IAA /\mu_\IAA}}{2\kappa_\IAA}, \\
    & p_i = p^* = \frac{-1 + \sqrt{1+4\kappa_\PIN (\rho_{\PIN_0} + \rho_\PIN
    a^*)/\mu_\PIN}}{2\kappa_\PIN},
  \end{aligned}
  \label{eq:trivialSolution}
\end{equation}
for $i=1,\ldots n$. 
However the 1D regular domain considered here is finite, so the homogeneous
solution exists only for $T=0\; \mu \textrm{m}^3/\textrm{h}$, as for positive
$T$ the sums in the transport term in~\eqref{eq:SmithModel1} do not vanish in
general (Result \ref{res:HomState}). 
For $D=0\, \mu \textrm{m}^2/\textrm{h}$ and $ T \ll 1\, \mu \textrm{m}^3/\textrm{h}$,
Lemma~\ref{lemma1} shows the existence of patterns in the form of small deviations from
the homogeneous steady state (see Section~\ref{sec:one-dimensional-2-comp} for a
full derivation)
\begin{equation}
\begin{aligned}
a_i &= 
\begin{cases}
  a^*  & \text{for $i = 1 \ldots n-2$,} \\[1em]
  a^* + 
  T 
  \dfrac{p^*}{2V}
  \bigg[
  \dfrac{\rho_\IAA \kappa_\IAA}{1 + \kappa_\IAA (a^*)^2}
  + \mu_\IAA
  \bigg]^{-1}
  \dfrac{(a^*)^2}{1 + \kappa_T (a^*)^2}
  & \text{for $i = n-1$,} \\[1em]
  a^* -
  T
  \dfrac{p^*}{2V}
  \bigg[
  \dfrac{\rho_\IAA \kappa_\IAA}{1 + \kappa_\IAA (a^*)^2}
  + \mu_\IAA
  \bigg]^{-1}
  \dfrac{(a^*)^2}{1 + \kappa_T (a^*)^2}
  & \text{for $i = n$.}
\end{cases}\\
p_i &= 
  \begin{cases}
    p^*  & \text{for $i=1,\ldots,n-2$},\\
    p^* + \left[\dfrac{(\rho_{\PIN_0} + \rho_\PIN a^*)\kappa_\PIN}
	  {(1 + \kappa_\PIN p^*)^2}+ \mu_\PIN\right]^{-1}\!\!\!\dfrac{\rho_\PIN}{1+\kappa_\PIN p^*}(a_i-a^*) & \text{for $i=n-1$},\\
        p^* - \left[\dfrac{(\rho_{\PIN_0} + \rho_\PIN a^*)\kappa_\PIN}
	  {(1 + \kappa_\PIN p^*)^2}+ \mu_\PIN\right]^{-1}\!\!\!\dfrac{\rho_\PIN}{1+\kappa_\PIN p^*}(a_i-a^*) &\text{for $i=n$}.
  \end{cases}
\end{aligned}
\label{eq:asymptoticSolutions}
\end{equation}

The equations above predict that, for infinitesimal values of the transport
coefficient $T$, the perturbed solution coincides with the homogeneous solution,
except for a small peak at cell $n-1$ and a small dip at cell $n$ (see also bold
curve in Figure~\ref{fig:FiniteDomainWithDiffusion}). In other words,
peaks are present where the number of neighbours differs from the number of
neighbours in the unbounded domain, that is, where the sum in the active
transport in Equation~\eqref{eq:SmithModel1} is nonzero (Result \ref{res:smallT}). 
The analysis above applies also in the limit of slow dynamics of the PIN1
proteins: upon assuming $p$ constant and homogeneous in the tissue, we find that
$a_i$ is as in Equation~\ref{eq:asymptoticSolutions}, with $p^*$ replaced by $p$
(see Section~\ref{sec:one-dimensional} for details).

We have thus established that, in regular domains, a small auxin transport
coefficient $T$ elicit low auxin peaks. Such correlation was previously reported
in numerical experiments on various models~\cite{Jonsson2006,Sahlin2009} and we
now provide a mathematical explanation of this phenomenon.

Asymptotic calculations can also be carried out in the presence of
diffusion ($D \neq 0$), leading to a linear system for the perturbations. 
In Section~\ref{subsec:modelsWithDiffusion} we present a derivation for
generic models in generic tissues, which is then specialised for the Smith model
as an example. Quantitative results of this calculation are shown in
Figure~\ref{fig:FiniteDomainWithDiffusion}, where we plot approximate steady
states for the Smith model towards the boundary $i=n$ for $T=3\cdot
10^{-5} \mu \textrm{m}^3/\textrm{h}$ and various values of the diffusion
coefficient. We notice that, in the regime of small active transport and
comparatively
much bigger diffusion coefficient, a peak is still present at the
boundary. Inspecting the solid line ($D=0 \; \mu
\textrm{m}^2/\textrm{h}$) and the dashed lines ($D=0.06 \; \mu
\textrm{m}^2/\textrm{h}$ and $D=0.18 \; \mu \textrm{m}^2/\textrm{h}$)
we see that the peaks decrease in amplitude and are more spread out,
as expected (Result \ref{res:diffusion}).  
As a concluding remark, we point out that these results are not influenced
by the no-flux boundary conditions specified at $i=1$: the only physical boundary is
at $i=n$, where the number of neighbours differs from the interior.

\subsubsection{Forming high peaks in the interior via snaking}

\label{subsec:SnakingContinuationCurve}
We now turn to the more interesting question of how the tissue develops high
auxin peaks (Result \ref{res:snaking}) which are observed in experiments. Once
again, we illustrate our findings in the 1D regular case and generalise in the
following sections.

In realistic simulations the transport coefficient $T$ is not
necessarily small \cite{Smith2006,Jonsson2006,Kramer2011}, therefore it is
interesting to explore the solution landscape when $T$ is increased at the
presence of diffusion. This is done using numerical bifurcation analysis, that
is, equilibria of systems~\eqref{eq:model}
are followed in parameter space using Newton--Raphson method and pseudo-arclength
continuation~\cite{Krauskopf}.  Linear stability is then inferred computing the
spectrum of the Jacobian at the steady state.

\begin{figure}
  \centering
  \includegraphics{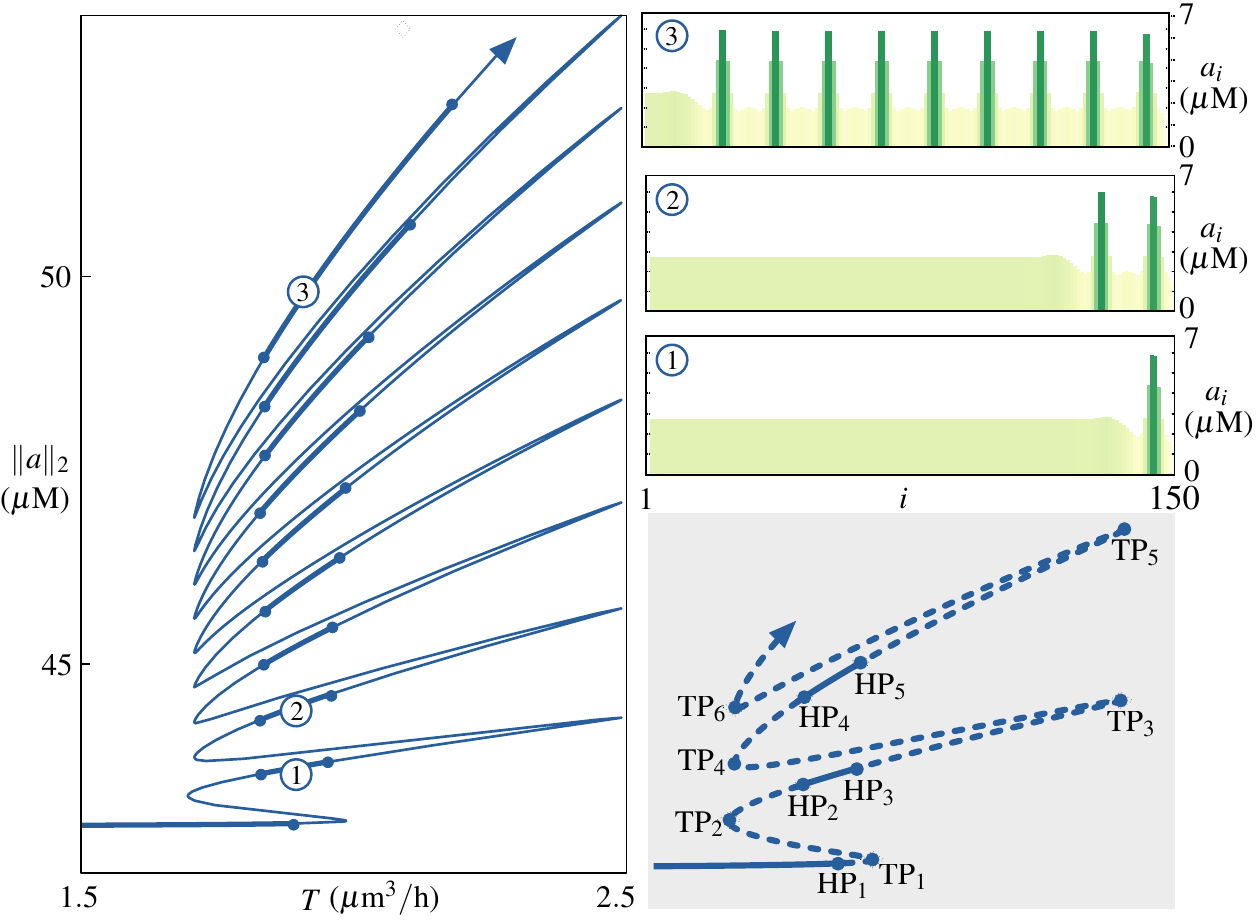}
  \caption{
    Bifurcation diagram and selected solution patterns for the Smith
    model posed on a row of $150$ identical cells with Neumann boundary
    conditions at $i=1$ and free boundary conditions at $i=n$ (see
    Figure~\ref{fig:FiniteDomainWithDiffusion}). Left: 2-norm of
    auxin concentration versus active transport parameter $T$. Right: as the
    snaking bifurcation diagram is ascended, new peaks are formed from the
    boundary towards the interior. Bottom-right: stable segments of the branch
    are found between Hopf bifurcations $\textrm{HP}_2$ and $\textrm{HP}_3$,
    $\textrm{HP}_4$ and $\textrm{HP}_5$, etc. Other secondary instabilities (not
    shown) are present along the unstable branches. Parameters: $D= 1\,\mu
    \textrm{m}^2/\textrm{h}$, $\rho_\IAA = 0.85 \, \mu\textrm{M}/\textrm{h}$;
    other parameters as in Table~\ref{table:parameters}.
  }
  \label{fig:bifdiagramr0850}
\end{figure}

In Figure~\ref{fig:bifdiagramr0850} we show a branch of solutions of the
Smith model for the 1D regular domain obtained with the parameter
set in Table~\ref{table:parameters}. We start the computation from the
homogeneous solution at $T=0$ and follow the pattern for increasing values of $T$. As
$T$ changes, we plot the $2$-norm of the auxin vector, $\Vert \va \Vert_2$,
which is a measure of the spatial extent of the solution (the lower $\Vert \va
\Vert_2$, the more localised the pattern) and denote stable (unstable) branches with
solid (dashed) or thick (thin) lines.

The low peak found close to the boundary persists for increasing values of $T$ and grows
steadily until we meet a first turning point (TP1). Before exploring the
diagram in full, we compare our numerical findings with the analytical
predictions of the asymptotic theory, valid for small $T$. The analytic
asymptotic profile~\eqref{eq:asymptoticSolutions} gives
a relative error $\Vert \va - (\va^* + T\boldsymbol{\alpha})
\Vert_2 / \Vert \va \Vert_2$ less than $0.4\%$ for $T \leq 0.2 \; \mu
\textrm{m}^3/\textrm{h}$, after which higher-order terms become predominant. This is
shown in Figure~\ref{fig:analyticVsNumericalBD} where we compare a branch of
approximate solutions (magenta) to a branch of solutions to the full nonlinear
problem (blue) for $D=1\, \mu \textrm{m}^2/\textrm{h}$ and small values of $T$. 

\begin{figure}
  \centering
  \includegraphics{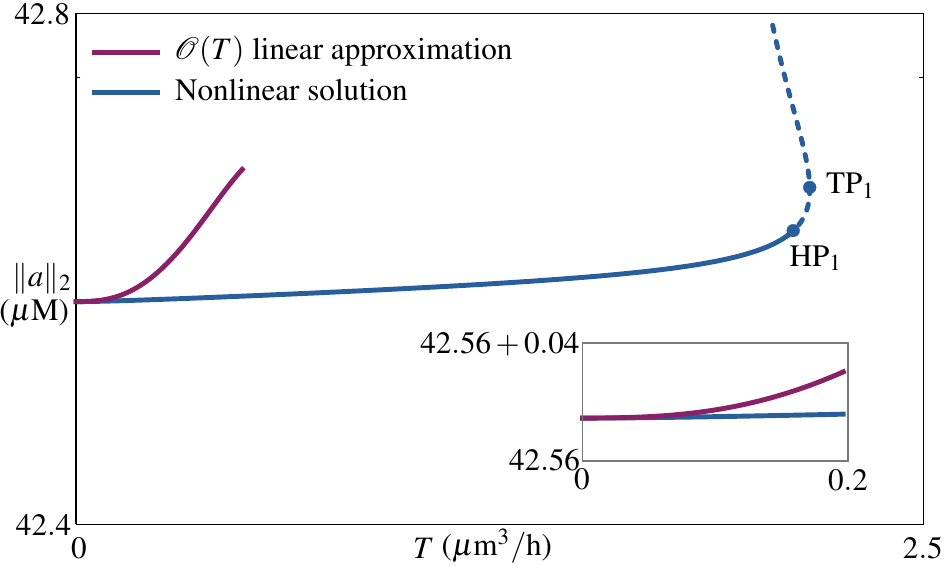}
  \caption{
    Comparison between a branch of approximate linear $\mathcal{O}(T)$
    solutions (magenta) and the corresponding branch of solutions to the full
    2-component nonlinear system by Smith (blue) for a row of 150 identical
    cells with homogeneous Neumann boundary conditions at $i=0$ and free boundary
    conditions at $i=n$. The plot shows the 2-norm of the IAA concentration
    vector versus the continuation parameter $T$. Parameters as in
    Figure~\ref{fig:bifdiagramr0850}.
  }
  \label{fig:analyticVsNumericalBD}
\end{figure}

As we ascend the bifurcation diagram in Figure~\ref{fig:bifdiagramr0850}, new peaks
are formed on the left side
of the existing peaks, that is, towards the interior of the domain, until the whole
domain is filled with peaks. We note that peaks are regularly spaced, as it
was also observed in \cite{Smith2006, Jonsson2006,  Reinhardt2003}. 

This bifurcation diagram resembles the one found for reaction--diffusion PDEs posed
on the real line~\cite{burke-knobloch:07,burke-knobloch:07c,beck-knobloch-etal:09}
except that here peaks are formed at the boundary rather than at the core of the
domain. When peaks fill the entire domain, the branch enters an unstable irregular
regime without snaking (not shown). Branches of solutions with peaks covering the
entire domain are also present (not shown) and are partially discussed in
Section~\ref{subsec:InfluenceOfSecondaryParameters}.

\begin{rem}[Biological interpretation of snaking] The diagram in
  Figure~\ref{fig:bifdiagramr0850} makes a plausible biological prediction for the
  formation of large peaks. A tissue composed of a string of cells, in the presence of
  passive diffusion, selects auxin patterns depending on the value of the active
  transport. Our analysis of the Smith model predicts that there exist two main
  regimes: for small $T$ there is a single low auxin peak at the boundary. As
  $T$ becomes larger, we enter a regime where the tissue can select from a large
  variety of auxin patterns. If, for instance $T \approx 1.9 \mu
  \textrm{m}^3/h$, the tissue is able to select pattern 1, 2 or 3, which have a variable
  number of peaks. Therefore, the pattern selected in experiments depends highly
  upon the initial conditions of the system, similarly to what was reported by
  J\"onsson and Krupinski~\cite{Jonsson2010}. 
  % We computed (not shown) an analogous snaking
  % bifurcation diagram for the Chitwood \etal model. 
  As we shall see, a slanted version of the snaking bifurcation diagram is also
  present in 2D domains (both regular and irregular and for both the Smith and
  the Chitwood models).
  \end{rem}

\begin{figure}
  \centering
  \includegraphics{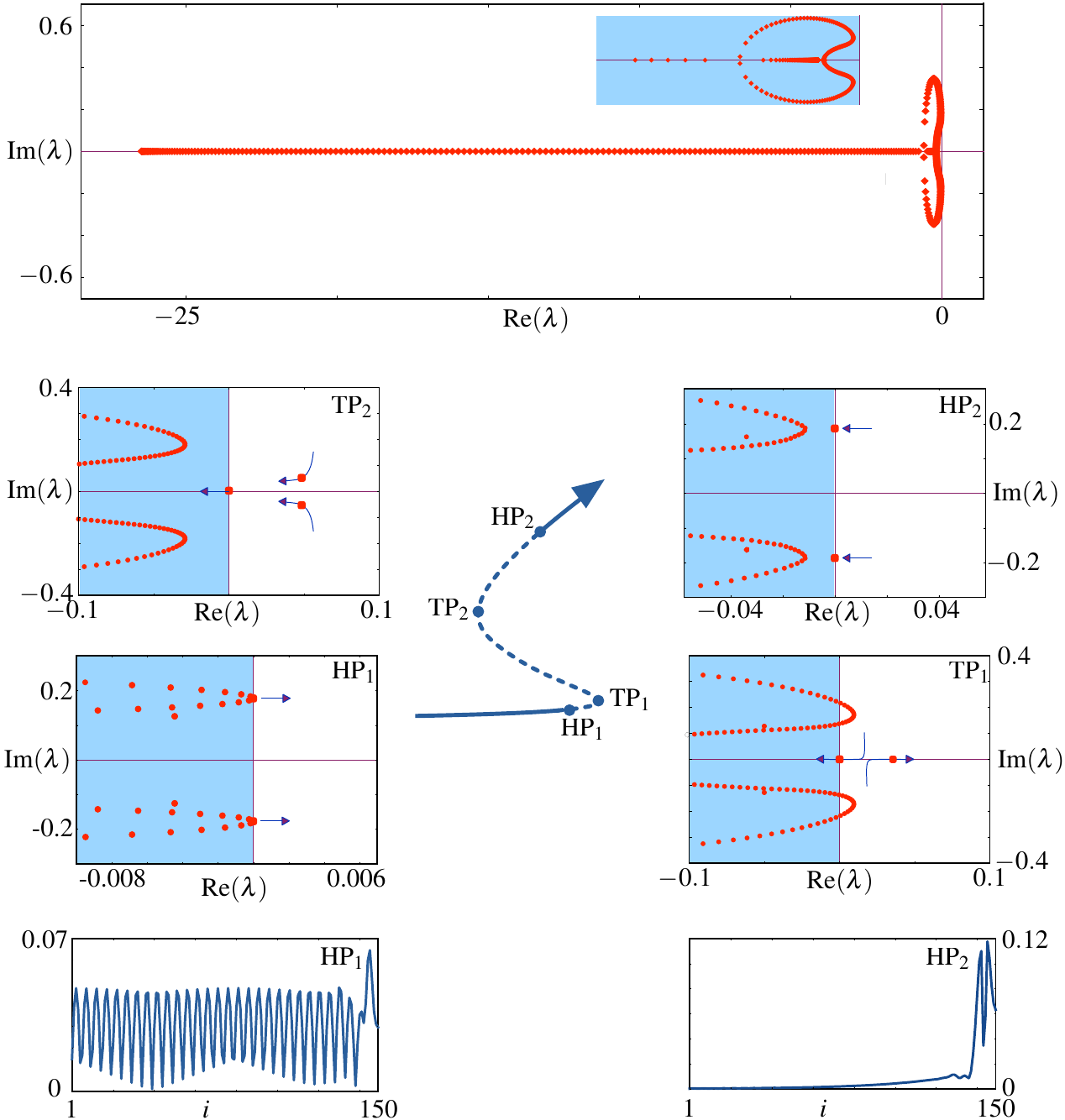}
  \caption{
    Spectral computations of selected solution profiles. Top panel: spectrum of a
    stable solution point for $T< T_{\textrm{HP}_1}$. Middle panels: spectra at
    selected bifurcation points along the snaking bifurcation branch of
    Figure~\ref{fig:bifdiagramr0850}. Bottom panels: unstable eigenfunctions at
    $\textrm{HP}_1$ and $\textrm{HP}_2$. Parameters as in
    Figure~\ref{fig:bifdiagramr0850}.
    }
    \label{fig:spectrum}
\end{figure}

\subsubsection{Instabilities on the snaking branch}
To understand if a solution pattern in stable to small
  perturbations we study in detail the eigenvalues of the Jacobian
  matrix (Result \ref{res:snaking}).  The Jacobian matrix for the spatially-extended
system~\eqref{eq:SmithModel1}--\eqref{eq:SmithModel2} is sparse with a
characteristic block structure determined by transport and diffusion
terms (we refer the reader to~\cite{Draelants2013} for a detailed
description) and, for relatively small systems such as this one,
eigenvalues are computed with dense linear algebra routines.

In this example, the solution with one small peak 
at the boundary becomes unstable at a Hopf bifurcation ($\textrm{HP}_1$) at
$T\approx 2.1  \; \mu \textrm{m}^3/\textrm{h}$, closely
followed by other oscillatory instabilities and a saddle-node bifurcation
($\textrm{TP}_1$) at $T\approx 2.2\; \mu \textrm{m}^3/\textrm{h}$, after
which the solution remains unstable. On the snaking branch, we find that saddle-node
and Hopf bifurcations alternate regularly, as documented in
Figure~\ref{fig:bifdiagramr0850}: saddle-node bifurcations align at 
$T\approx 1.9 \; \mu \textrm{m}^3/\textrm{h}$ and $T \approx 2.5
\; \mu \textrm{m}^3/\textrm{h}$, while Hopf bifurcations depart from each other as
patterns become less localised. In this parameter setting, stable portions
of the branch are delimited by two Hopf bifurcations, which, to the best of our
knowledge, has not been reported before for snaking in reaction--diffusion systems.
It should be noted, however, that Burke and Dawes~\cite{burke2012localized} found
Hopf bifurcations at the bottom of the snaking branch for an extended
Swift--Hohenberg equation, which may lead to a bifurcation structure similar to the
one in Figure~\ref{fig:bifdiagramr0850} if secondary parameters are varied.

In Figure~\ref{fig:spectrum} we show spectra of solutions at selected
points on the branch. Overall these spectra resemble those found in
discretised advection-diffusion PDEs, with largely negative real eigenvalues
associated with diffusion terms of the governing equations. In this context,
however, increasing the number of cells does not alter the cell spacing, hence
the spectrum does not grow in the negative real direction for larger system
sizes. 
\begin{figure}
\centering
\includegraphics{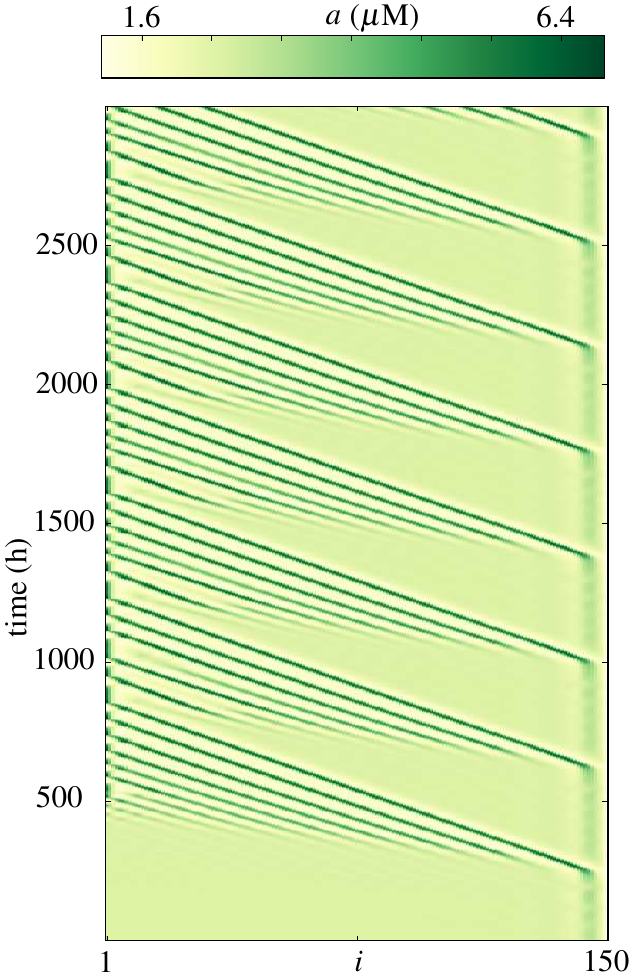}
\caption{
  Spatio-temporal state obtained via time simulation of the Smith model
  posed on a one-dimensional domain, close to the Hopf bifurcation $\textrm{HP}_1$. 
  We set $T \gtrsim T_{HP_1}$ (and other parameters as in
  Figure~\ref{fig:bifdiagramr0850}) and use as initial condition a steady state
  with one peak at the boundary, obtained for $T \lesssim T_{HP_1}$. A long-time
  periodic auxin wave travels (and new auxin peaks are recruited) from the
  boundary towards the interior of the domain.
}
\label{fig:timeEvolution}
\end{figure}

We monitored spectra of localised solutions as the snaking branch was ascended
(see Figure~\ref{fig:spectrum}): immediately after the Hopf bifurcation HP1,
multiple eigenvalues cross the imaginary
axis, therefore several oscillatory instabilities exist between $\textrm{HP}_1$
and $\textrm{TP}_1$ ($\textrm{HP}_2$ and $\textrm{TP}_2$, etc.). In the bottom
panel of Figure~\ref{fig:spectrum} we show that the Hopf
eigenfunction at $\textrm{HP}_1$ has a maximum near the boundary
and the one for $\textrm{HP}_2$ is also spatially localised. 
We expect that branches of time-periodic (possibly spatially-localised)
solutions emerge from the Hopf bifurcations. We have not observed stable
small-amplitude oscillations in direct numerical simulations, but we report the
existence of stable periodic states in which a temporal oscillation of the peak
at $i=n$ initiates a wave of auxin moving towards the boundary at $i=1$, with
long oscillation periods.  

In Fig.~\ref{fig:timeEvolution} we show such a periodic solution obtained via time
simulation in the neighbourhood of $\textrm{HP}_1$ (which is also visible in
Figure~\ref{fig:analyticVsNumericalBD}). We set $T
\gtrsim T_{\textrm{HP}_1}$ and use as initial condition a steady state (with one
peak at the boundary), obtained for $T \lesssim T_{\textrm{HP}_1}$. In the
resulting periodic state, auxin
peaks are dynamically formed from the tip towards the interior of the domain: we
point out that the period of oscillations (about 377 hours) is
much greater than the period associated to the unstable Hopf eigenvalues. In addition, on
such long time scales it is reasonable to assume that new cells are formed, so the
geometry of the problem should change as well.

It was recently shown by Farcot and Yuan that, in one-dimensional flux-based
models with no-flux boundary conditions, active transport is sufficient to
elicit auxin oscillations~\cite{Farcot2013}. In the concentration-based model
considered here, oscillatory states in regular one-dimensional arrays are also
found in a regime where active transport dominates over diffusion.

\subsubsection{Changes in the auxin production parameter}
\label{subsec:InfluenceOfSecondaryParameters}

\begin{figure}
  \centering
  \includegraphics{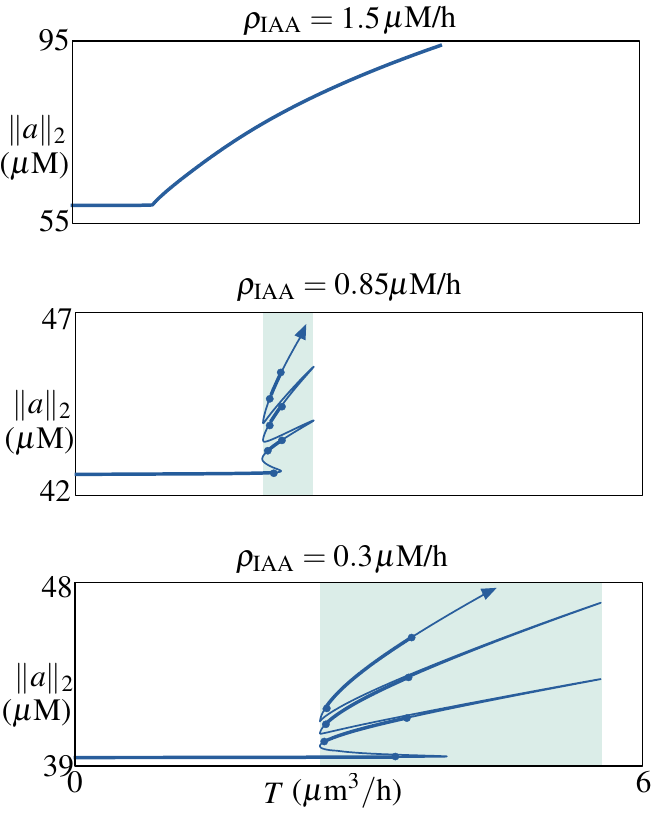}
  \caption{
    Bifurcation diagrams of the Smith model for various values of
    the auxin production coefficient $\rho_\IAA$ (other parameters as in
    Figure~\ref{fig:bifdiagramr0850}). The lower the $\rho_\IAA$, the wider the
    snaking width and the stable branches. For sufficiently large values of
    $\rho_\IAA$ the snaking disappears and peaks form at once on a fully stable
    branch.
  }
  \label{fig:bifdiagramMulti}
\end{figure}
We conclude this primer on the 1D tissue by investigating the robustness of the
snaking scenario described above (Results \ref{res:robustness}). In~\cite{Draelants2013} it was shown that the
auxin production parameter $\rho_\IAA$ has a significant influence on the solution
profiles, therefore it
is interesting to study how changes in this parameter affect the bifurcation
structure. We repeated the numerical continuation of the Smith model for $20$
values of $\rho_\IAA$ in the interval $[0.3 \; \mu \textrm{M}/\textrm{h},1.5 \;
\mu \textrm{M}/\textrm{h}]$. For low values of $\rho_{_{\IAA}}$, both the
oscillatory instability $\textrm{HP}_1$ and the saddle node $\textrm{TP}_1$ move
to the right and give rise to snaking bifurcation diagrams with increasingly
wider stable segments (see Figure~\ref{fig:bifdiagramMulti}). As a
consequence, in biological experiments where the auxin production was kept to a
low value, the tissue would support multiple auxin patterns for a wider range of
active transport coefficients.
In the limit $\rho_{_{\IAA}} \ll \mu_{_{\IAA}}$
decay dominates over production, hence large peaks can not be sustained and
indeed we find that the solution with a single small peak at the tip persists
for very large values of $T$.
This is in line with previous papers by Sahlin \etal \cite{Sahlin2009}
and De Reuille \etal \cite{deReuille2006} where it was postulated that auxin
patterning demands a minimal level of auxin production within the tissue.

On the other hand, increasing $\rho_\IAA$ causes the snaking diagram
to shrink and then disappear for $\rho_\IAA \geq 1.2\; \mu \textrm{M}/\textrm{h}$. In
Figure~\ref{fig:bifdiagramMulti} we show a fully stable branch for $\rho_\IAA = 1.5\; \mu
\textrm{M}/\textrm{h}$. On this branch peaks
develop at once from the small-amplitude solution, without turning points. We
mention however that for $\rho_\IAA$ between $1.2\; \mu \textrm{M}/\textrm{h}$ and $1.3\; \mu
\textrm{M}/\textrm{h}$, Hopf bifurcations are found along the non-snaking branch (not
shown), similar to what is found for the infinite domain~\cite{Draelants2013}.

As snaking branches distort, several types of secondary instabilities
and collisions with neighbouring branches occur. In particular, we
report codimension-2 Bogdanov--Takens bifurcations originating from
the collision between $\textrm{TP}_2$ and $\textrm{HP}_2$, ($\textrm{TP}_4$ and
$\textrm{HP}_4$, $\textrm{TP}_6$ and $\textrm{HP}_6$, etc.)
when $\rho_\IAA$ is varied. The existence of these codimension-2
bifurcations could also be envisaged from the spectra in
Figure~\ref{fig:spectrum}. 
These instabilities, as the ones reported in
the previous section, indicate that the tissue is capable of sustaining
oscillations and dynamical auxin patterning, as well as steady states with
multiple peaks. Dynamic states with spatio-temporal coherence (such as the one
reported in Figure~~\ref{fig:timeEvolution}) are interesting from a biological
standpoint~\cite{Merks2007}, as they occur for the biologically plausible
parameter values reported in Table~\ref{table:parameters}. However, we could not
find them in 2D domains with realistic parameter values, hence we do not discuss
them further in this paper.
\label{subsec:TwoDimensionalDomain}

\begin{figure}
  \centering
  	\includegraphics{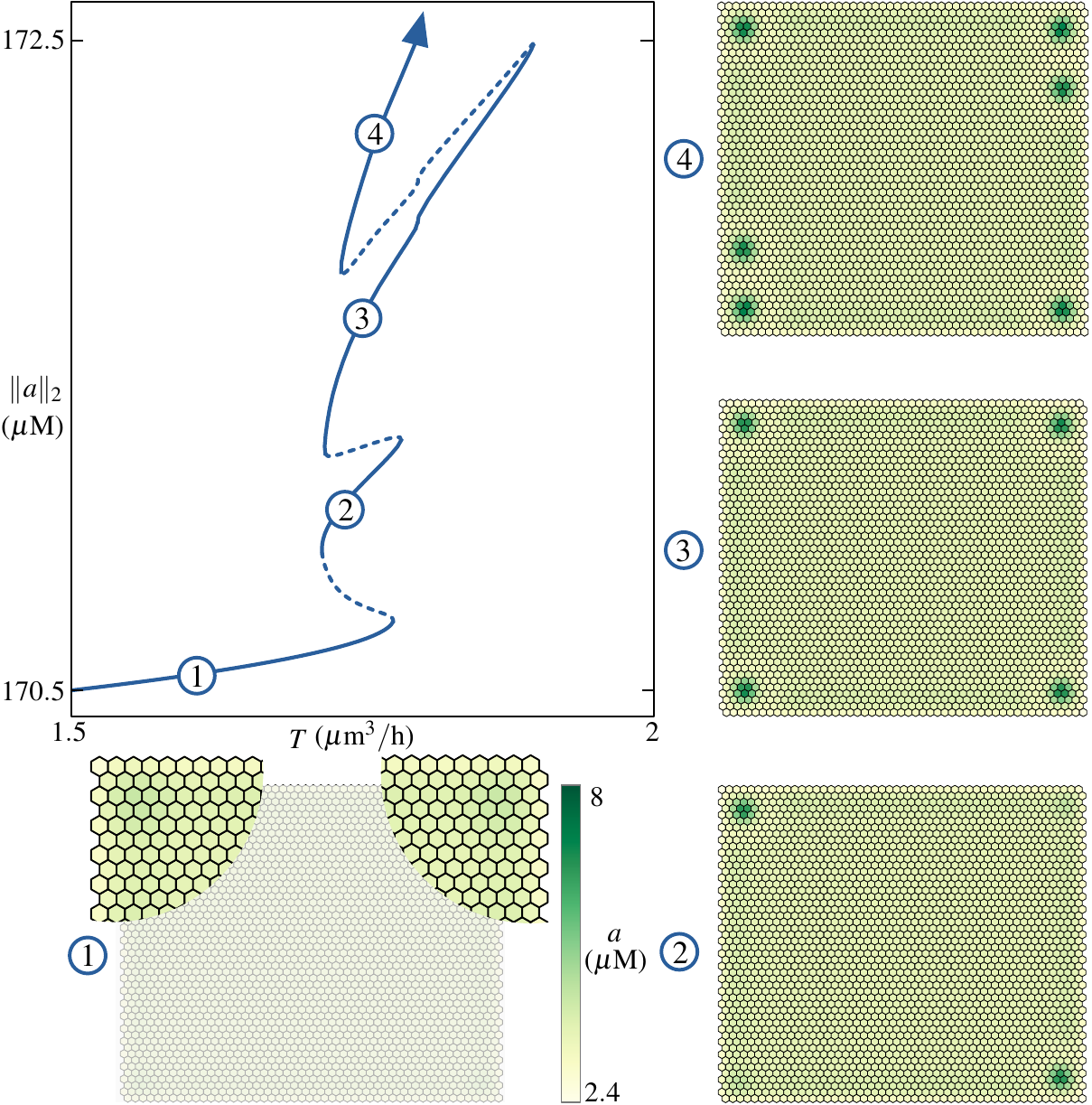}
  \caption{
  Bifurcation diagram and selected solution patterns for the Smith
  model posed on a regular array of $50$ by $50$ hexagonal cells with 6
  neighbours in the interior, 3 neighbours at the left and right edges and 4
  neighbours at the top and bottom edges. Larger peaks are developed initially at
  the top-left and bottom-right corner (see also the values of the pre-factors
  $\xi_i$ in Figure~\ref{fig:xiHexagons}) and new peaks are formed along the left and
  right edges, where we have fewer neighbours. 
  Parameters: $D= 1\,\mu \textrm{m}^2/\textrm{h}$, $\rho_\IAA = 1.5 \,
  \mu\textrm{M}/\textrm{h}$; other parameters as in~Table \ref{table:parameters}. 
  }
  \label{fig:regular2Dsnaking}
\end{figure}

\subsection{Two-dimensional domains}
\label{subsec:2DDom}
We now move to more realistic geometries and study 2D domains with approximately
square and circular boundaries, on which we prescribe free boundary conditions.
In this Section we will revisit
Results~\ref{res:HomState}--\ref{res:irregular} in the 2D setting for both the
Smith and the Chitwood  models, so we refer the reader to the general summary in
Section~\ref{sec:resultsSummary} and the 1D primer in
Section~\ref{subsec:TheExistenceOf}.

The methods described in the previous section apply straightforwardly to the 2D
case. In the first example we consider the Smith model on a grid of $50$
by $50$ hexagonal prismic cells with $l_{ij} = 1 \, \mu\textrm{m}$ and $V_i =
3\sqrt{3}/2 \, \mu\textrm{m}^3$. Cells have $6$ neighbours in the
interior, $3$ neighbours at the left and right edges, and $4$ neighbours at the
top and bottom edges.
In this domain, corners are not all equal (see Figure~\ref{fig:regular2Dsnaking})
and we chose this configuration intentionally, to illustrate the influence of
the number of neighbouring cells on the emerging patterns in 2D domains. 
\begin{figure}
  \centering
  \includegraphics{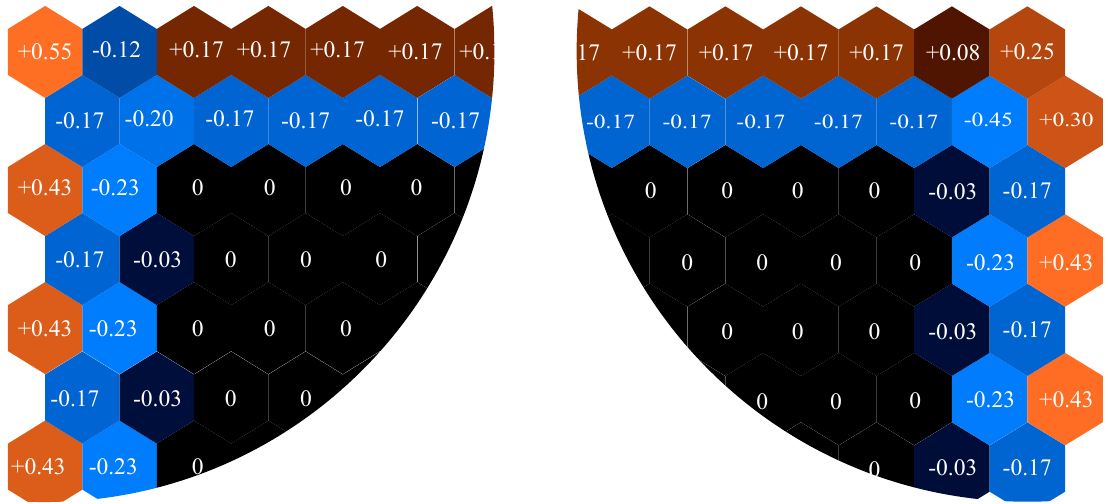}
  \caption{
    Geometric pre-factors $\vxi$ at the top-left and top-right corners of 
    the 2D regular domain of Figure~\ref{fig:regular2Dsnaking}. The interior set
    $\mathcal{I}$ is now clearly visible. Peaks and dips are proportional, for
    small $T$ and $D=0$, to the values of $\xi_i$. When $D \neq 0$ the largest
    peak is formed in the interior, close to the top/left corner and a smaller
    one is formed in the interior, towards the top/right corner.
  }
  \label{fig:xiHexagons}
\end{figure}

Figure~\ref{fig:xiHexagons}
shows the values of the geometric pre-factors $\boldsymbol{\xi}$ for 2 corners of
the domain: our asymptotic analysis for $D=0 \; \mu \textrm{m}^2/\textrm{h}$
(see the discussion in
Section~\ref{subsubsec:AFiniteOneDimensionalDomainWithoutDiffusion} and the
generic derivation in Section~\ref{sec:MaterialsAndMethods})
predicts the formation of peaks at the boundaries, with the highest peak at the
top-left and bottom-right corners. 
Numerical computations for positive $D$ show that these peaks persist and become
prominent for increasing $T$ (Figure \ref{fig:regular2Dsnaking}, pattern 2). As
in the one-dimensional case, patterns are arranged on a snaking bifurcation branch,
even though in two-dimensions the snaking is slanted. Peaks arise initially in
all four corners, then new spots are formed along the left and right edges
(where cells have fewer neighbours), and then, for sufficiently large values of
$\rho_\IAA$, along the top and bottom edges. In contrast with the 1D case, we
have not found oscillatory bifurcations in this region of parameter
space, so we conclude that stable portions of the branch are now delimited by
turning points (see Figure~\ref{fig:regular2Dsnaking}). From a
biological perspective, this means that patterns with peaks at the boundary are
more likely to be observable in experiments, as they are stable in a wider region of
parameter space.

\begin{figure}
  \centering
  	\includegraphics{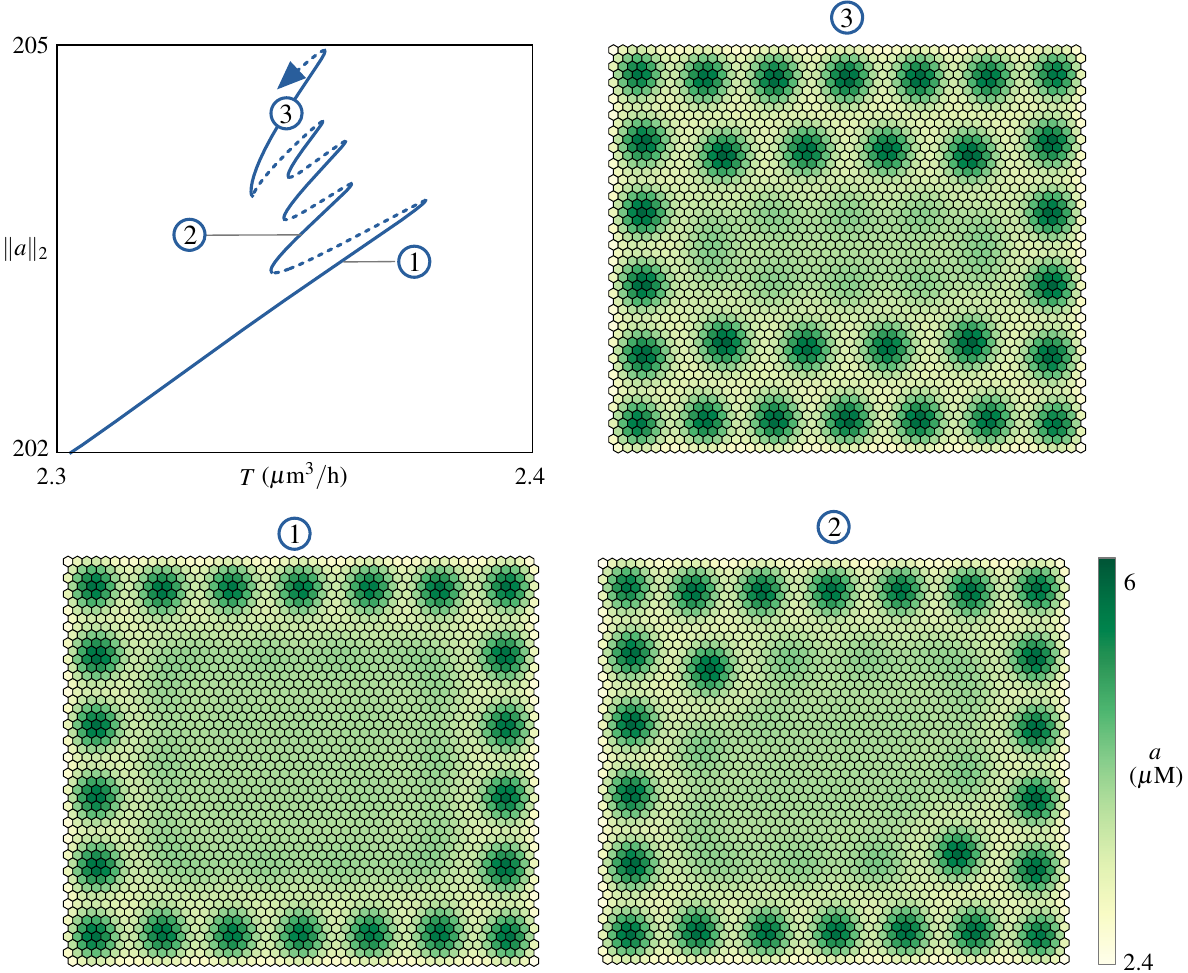}
  \caption{
  Bifurcation diagram and selected solution patterns for the Chitwood
  model posed on the same domain of Figure~\ref{fig:regular2Dsnaking} and a
  slightly larger value of the auxin production coefficient ($\rho_{_{\IAA}}=2
  \mu \textrm{M}/\textrm{h}$, $D= 1\,\mu \textrm{m}^2/\textrm{h}$ and other
  parameters are as in~Table \ref{table:parameters}).
  A ring of peaks is developed at the boundary, owing to the increased value of
  $\rho_\IAA$. Other spots are formed in pairs (pattern 2), until they fill a
  full row (pattern 3) and the whole domain (not shown).
  }
  \label{fig:regular2DsnakingChitwood}
\end{figure}

In a second example, we consider the Chitwood model posed on the
2D regular domain, using the parameters of Table~\ref{table:parameters}.
Remarkably, the resulting bifurcation diagram (not shown) is analogous to the
one found in Figure~\ref{fig:regular2Dsnaking} for the Smith model. As a
further confirmation, we tested the robustness of the snaking mechanism to
changes in the auxin production coefficient, as it was done for the 1D case in
Figure~\ref{fig:bifdiagramMulti}.  We set $\rho_\IAA=2\, \mu
\textrm{M}/\textrm{h}$ and show in Figure~\ref{fig:regular2DsnakingChitwood} the
corresponding bifurcation diagram and solution patterns. While active transport
remains responsible for the selection of peaks towards the boundary, the
interplay with auxin production allows the formation of a ring of spots at the
boundary as opposed to single spots at the corners (see pattern 1 in
Figure~\ref{fig:regular2DsnakingChitwood}). After the
first turning point, spots are formed in pairs (pattern 2), until they
fill a full row (pattern 3) and the whole domain (not shown). As in Section
\ref{subsec:InfluenceOfSecondaryParameters}, the auxin production coefficient has a
large influence on the resulting peaks.  For this 2D regular domain, we also scanned
several values of the auxin production coefficient and confirmed that comparatively
high values of $\rho_\IAA$ induce the formation of peaks all-at-once (similar to top
panel of Figure~\ref{fig:bifdiagramMulti}), hence a fully patterned tissue is
possible without a Turing bifurcation.

\begin{figure}
  \centering
  \includegraphics{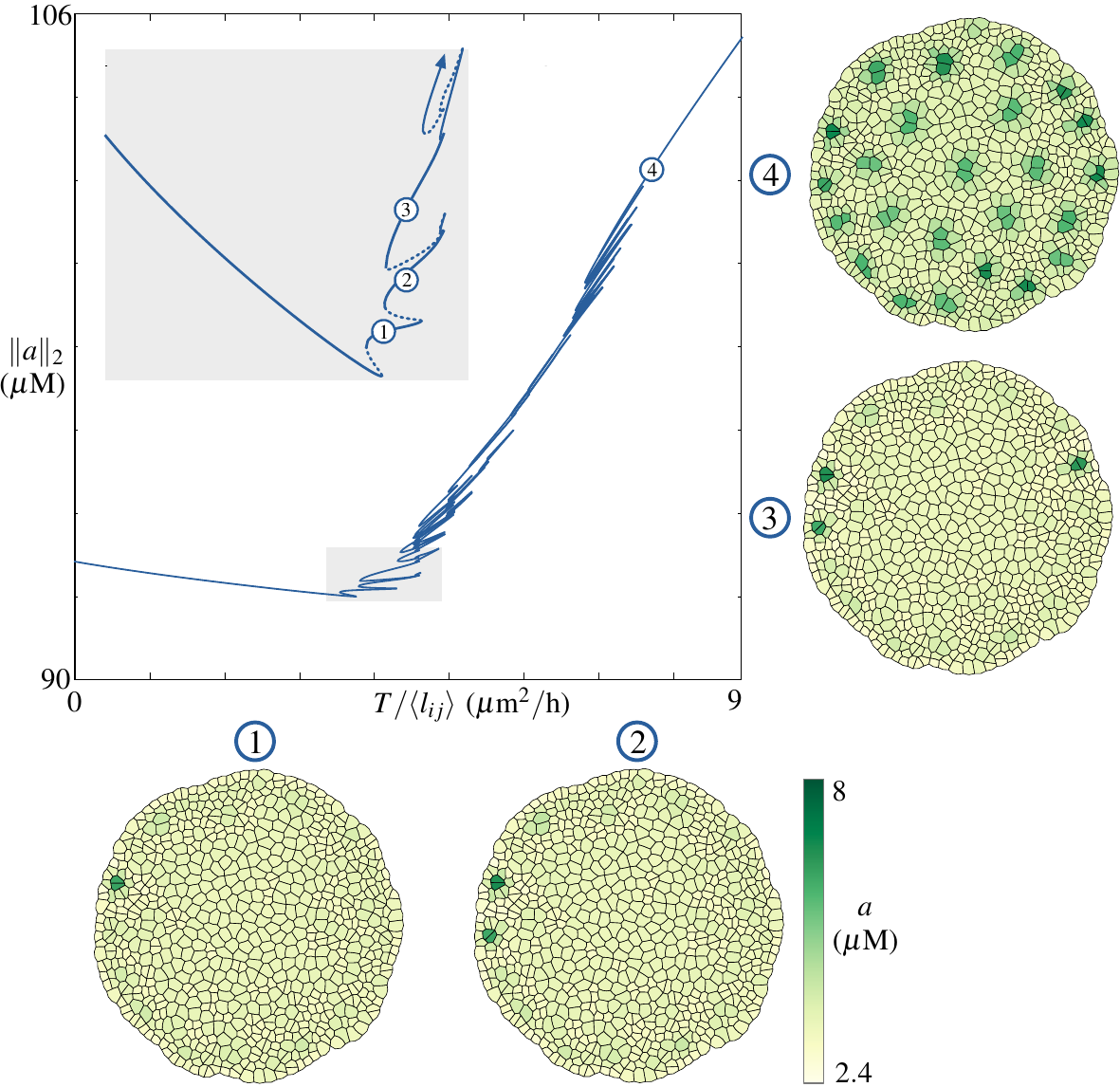}
  \caption{
    Bifurcation diagram and selected solution patterns for the Smith model
    for an almost-circular domain of $742$ irregular cells (geometry taken
    from~\cite{Merks2011}). We find an irregular and slanted bifurcation diagram
    with stable portion delimited by saddle-node bifurcations. 
    Parameters: $D= 1\,\mu \textrm{m}^2/\textrm{h}$, $\rho_\IAA = 1.5 \,
    \mu\textrm{M}/\textrm{h}$; other parameters as in
    Table~\ref{table:parameters}.
  }
  \label{fig:irregularsnaking}
\end{figure}

\begin{figure}
  \centering
  \includegraphics{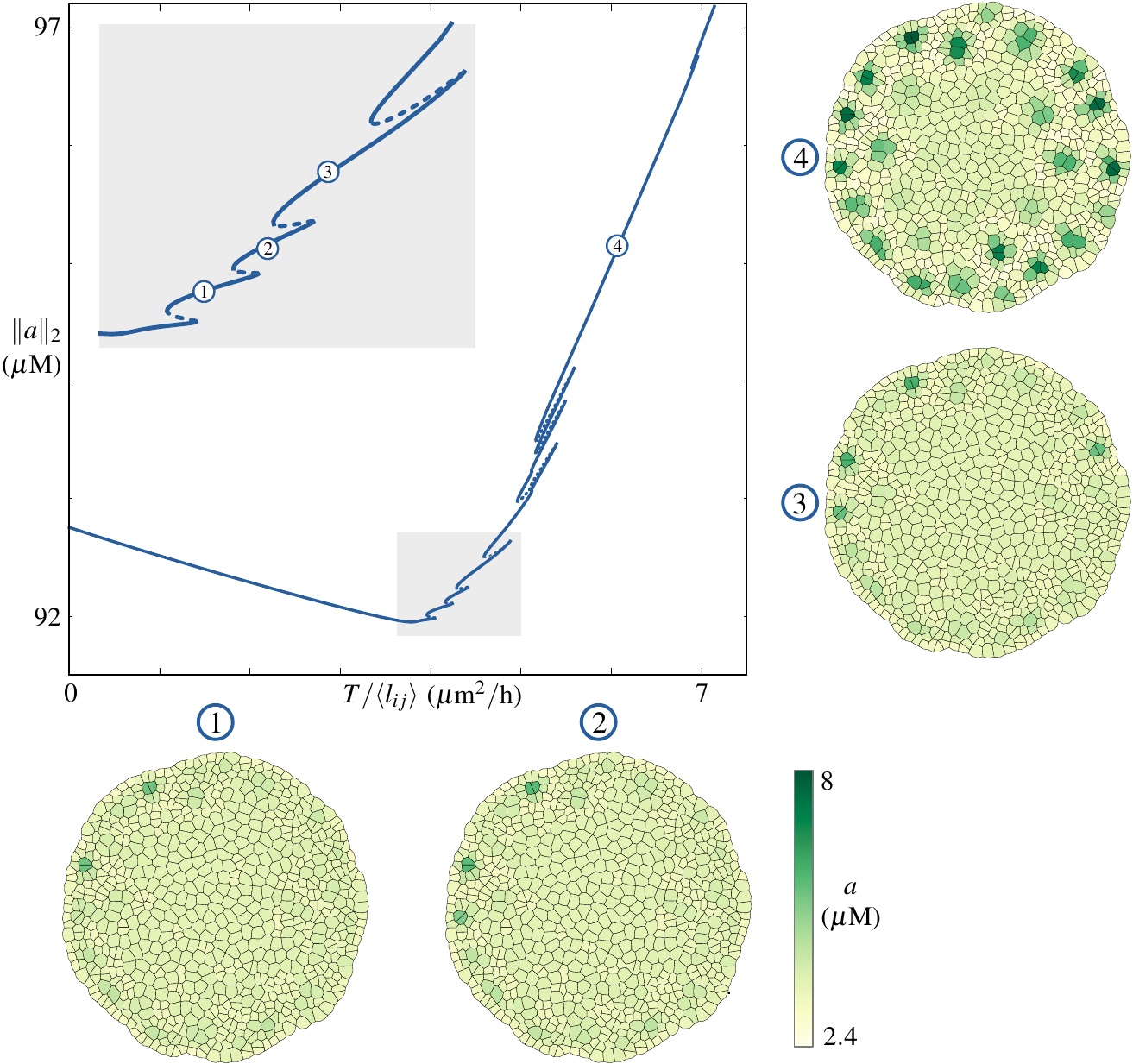}
  \caption{
  Bifurcation diagram and selected solution patterns for the Chitwood model for
  an almost-circular domain of $742$ irregular cells (geometry taken
  from~\cite{Merks2011}). We find an irregular and slanted bifurcation diagram with
  stable portion delimited by saddle-node bifurcations. Parameters: $D= 1\,\mu
  \textrm{m}^2/\textrm{h}$, $\rho_\IAA = 1.5 \, \mu\textrm{M}/\textrm{h}$; other
  parameters as in Table~\ref{table:parameters}.
  }
  \label{fig:irregularsnakingChitwood}
\end{figure}

In the remaining 2 examples, $742$ irregular prismic cells cover an almost-circular
domain in a realistic tissue  with free boundary conditions (see Figure
\ref{fig:irregularsnaking}, whose geometry has been extracted
from~\cite{Merks2011}). Even though the asymptotic analysis is not valid for irregular arrays, we
expect results to be qualitatively similar if cellular volumes and
contact areas do not vary greatly from cell to cell. In these examples
the number of neighbours varies over the domain; however, the cells at
the boundaries have predominantly fewer neighbours and this is where
peaks are formed initially. The bifurcation diagram is now plotted in terms of the
scaled bifurcation parameter $T/\langle l_{ij} \rangle$, where $\langle \cdot
\rangle$ denotes the average in the tissue (so far we have considered cases
where $\langle l_{ij}\rangle=1\,\mu\textrm{m}$, so the following diagrams are
directly comparable with the ones above). We note that in this domain $\langle
l_{ij} \rangle \approx 13.26 \, \mu\textrm{m}$ and $\langle V_i
\rangle \approx 294 \, \mu\textrm{m}^3$.  
As in the 2D regular cases,
stable portions of the branch are enclosed between consecutive
saddle-nodes bifurcations and there are no oscillatory instabilities on
the stable branches.  
Figure~\ref{fig:irregularsnaking} shows the results for the Smith model:
as usual, peaks are formed initially at the boundary and then fill the interior
(see pattern 4), and the slanted snaking ensures the existence of stable
solutions with localised peaks in a wide regime of the parameter $T$.

When we pose Chitwood model on the same irregular domain, the results
are strikingly similar, as seen in Figure~\ref{fig:irregularsnakingChitwood}:
peaks form initially at the top-left quadrant of the circular domain (see
patterns 1 and 2 in Figures~\ref{fig:irregularsnaking} and
\ref{fig:irregularsnakingChitwood}), confirming that it is the geometry of the
tissue to drive the spots location.  An inspection of the fully patterned
tissues (patterns 4 in Figures~\ref{fig:irregularsnaking} and
\ref{fig:irregularsnakingChitwood}) reveal that model parameters and functional
forms for the active transport functions have an influence on the size and structure of the peaks. Variations in $\rho_{_{\IAA}}$ also
confirmed the trend seen in Figure~\ref{fig:bifdiagramMulti} (not shown).

\section{Discussion}
\label{sec:discussion}
In this paper we investigated the origin of auxin peaks in generic
concentration-based models and proposed a robust mechanism for their formation
over short time scales, using a combination of asymptotic and numerical
bifurcation analysis.

The asymptotic calculations, valid for a class of models with identical cells
and weak active transport, show that peaks emerge as boundary corrections to the
homogeneous steady state: the peak amplitude depends on the
local geometry and is higher in regions where cells have fewer neighbours, that
is, next to the boundary. 
Crucially, this is a direct consequence of the mathematical structure of the
models considered here (Hypotheses \ref{hyp:domain}--\ref{hyp:nu}):
since 
the active transport depends on the number of neighbours at distance $2$ from
the $i$th cell via the geometric coefficients $\xi_i$, then deviations from the
homogeneous state will always appear at the boundaries, where the number of
neighbours is different from the interior of the tissue. This mechanism
is different from (and not in contrast with) the Turing bifurcation
scenario reported in previous studies on unbounded domains: on finite tissues,
peaks do not emerge from instabilities of the flat state, but they simply morph
from it for low values of $T$. The most immediate consequence of our
mathematical analysis is that, in concentration-based models, active transport
and geometry concur to promote localisation of auxin peaks~\cite{Heisler2005}.

In irregular domains, a similar asymptotic analysis can be carried out, but the
peak selection mechanism in this case also depends on the cellular volumes and
contact areas, so we can not exclude a priori that peaks will form in the
interior as well as on the boundary. A statistical characterisation of the peaks
location in relation to the variance of the cellular array is possible and
should be considered in future studies.

The two models by Smith~\cite{Smith2006} and Chitwood~\cite{Chitwood2012}
fit in the framework discussed above and, for these systems, we have
provided numerical evidence that peaks persist for moderate and large values of
the active transport rate $T$: large-amplitude peaks are arranged on a snaking
branch, which becomes slanted in 2D tissues.
A major implication of the numerical findings of
Figures~\ref{fig:irregularsnaking}--\ref{fig:irregularsnakingChitwood}
is that localised auxin peaks are observable: a slow experimental sweep in
the active transport from low to high values should reveal an initial
localisation of the peaks, which then progressively de-localise and fill the
domain. Conversely, if the active transport is kept constant, the tissue should
be able to select from more than one pattern, depending on the initial
condition.

Since snaking is now recognised as the footprint of localisation in a
wide variety of nonlinear media, we expect that bifurcation diagrams similar to
the ones shown in this paper for the Smith and the Chitwood models might also
arise generically. One class of models that are of interest and can be analysed
via numerical bifurcation analysis are flux-based auxin models, which have not
been studied in this paper. While numerical continuation is
readily applicable to such models, they are likely to require a separate
analytic treatment, as some of them do not possess the factorisation presented in
Hypothesis~\ref{hyp:nu}. 

Importantly, we find that the bifurcation scenario is influenced by the auxin
production rate, since the selectable configurations depend sensitively on the
balance between auxin production and active transport.
The results in Figure~\ref{fig:bifdiagramMulti} (also confirmed by the 2D
irregular calculations of Figures~\ref{fig:irregularsnaking} and
\ref{fig:irregularsnakingChitwood}) support the conclusion that if auxin
production rate was decreased quasi-statically, either actively or passively,
the organism would be able to switch from fully-patterned states to
configurations with few peaks at the boundary. In addition, since the parameters
of the concentration-based models considered here are scaled by cellular
volumes, we expect tissues with different cell sizes to behave similarly: in
tissues with larger $V$, the snaking limits are expected to occur for larger
values of $T$, so as to keep the ratio $T/V$ constant (and a similar reasoning
is valid for the passive transport parameter
$D$).

The conclusion reported above are naturally limited to experiments that are well
approximated by concentration-based models, and for the plausible biological
parameter values selected in the original papers by Smith \etal~\cite{Smith2006}
and Chitwood \etal~\cite{Chitwood2012}. We note that an experimental validation of
the predictions presented here requires the ability to detect changes in the
auxin distribution during development. An experimental technique
that could help testing the predictions of these two models is the one
recently proposed by Brunoud \etal~\cite{Brunoud2012}, which allows to visualise
auxin with high spatio-temporal resolution. We note that it would be possible to
apply numerical bifurcation analysis also to a modified model that
accounted for markers' dynamics and auxin-sensor interactions. 

A further desirable property of the experimental setup would be the
ability to stimulate auxin peaks locally (thereby changing initial conditions)
and test whether the tissue settles to a new equilibrium. In view of the large
uncertainty on the parameter values of the models, we expect our predictions to
agree qualitatively with experimental results.

\section{Materials and Methods}
\label{sec:MaterialsAndMethods}

In this section we present an analytical framework to construct steady
state solutions featuring localised auxin peaks in generic concentration-based
models. 

\subsection{Asymptotic derivation of peak solutions}
We begin by giving a generic definition of concentration-based models in the
absence of diffusion: as mentioned above, several examples from literature can
be cast in this form.
\begin{dfn}[Concentration-based model without diffusion]\label{def:conBasedModelNoDiff}
 A concentration-based model without diffusion is a set of $m \times n$ ODEs of the form
\begin{equation}
  \dot{\vy}_i = \vpi(\vy_i) - \vdelta(\vy_i) 
  + T \sum_{j \in \neigh_i} \vnu_{ji}(\vy_1,\ldots,\vy_n) - \vnu_{ij}(\vy_1,\ldots,\vy_n),
  \qquad i = 1,\ldots,n,
  \label{eq:concBasedModel}
\end{equation}
where $\vpi, \vdelta \colon \R_+^m \to \R_+^m $, are the production
and decay functions, respectively, $T \in \R_+$, is the (nonnegative) active
transport parameter, $\{1,\ldots,n\}$ are vertices of a static undirected graph
$G$, $\neigh_i \subseteq \{1,2,\ldots,n\}$ is the set of neighbours of cell $i$,
containing $\vert \neigh_i \vert$ elements and $\vnu_{ij}: \R_+^m \times \cdots
\times \R_+^m \to \R_+^m$
are the active transport functions. We will assume $\vpi$, $\vdelta$ and
$\vnu_{ij}$ to be smooth vector fields depending on a set of control parameters
$\vp \in \R^p_{+}$, but we omit this dependence for simplicity and write, for
instance, $\vpi(\vy_i)$ instead of $\vpi(\vy_i;\vp)$.
\end{dfn}
	\begin{rem} Concrete examples of concentration based models in this form can be found 
			in section \ref{sec:one-dimensional} an section \ref{sec:one-dimensional-2-comp}
	\end{rem}

We now prove the following result
\begin{lem}\label{lemma1}
  Let us consider the concentration-based model~\eqref{eq:concBasedModel} and
  let us suppose that there exist vector-valued functions $\vpsi \colon \R_+^m
  \times \R_+^m \to \R_+^m$ and $\vphi \colon \R_+^m \to \R_+^m $ such that
  \begin{equation}
  \vnu_{ij} (\vy_1,\ldots,\vy_n) = \vpsi(\vy_i,\vy_j) \odot \vphi(\vy_j)
  \oslash \sum_{k \in \neigh_i} \vphi (\vy_k), \qquad \text{for all $i,j =
  1,\ldots,n$,}
  \label{eq:factorization}
  \end{equation}
  where $\odot$ and $\oslash$ denote the standard Hadamard product and division
  between vectors. Further, let $\vys \in \R^m$ be such that $\vpi(\vys) =
  \vdelta(\vys)$, $\vpsi(\vys,\vys) \neq \mathbf{0}$ and $\vphi(\vys) \neq
  0$, then 
  \begin{enumerate}
    \item If $T=0$ or all cells have the same number of neighbours, $\vert
      \neigh_i \vert = \vert \neigh^* \vert$, then the
      homogeneous solution $(\vy^*,\ldots,\vy^*)^\textnormal{T} \in \R^{nm}$ is a steady state
      for the concentration-based model.
    \item If $0< T \ll 1$ and cells have different number of neighbours and the
      Jacobian matrix $\vpi'(\vy^*) - \vdelta'(\vy^*)$ is nonsingular, then a
      inhomogeneous steady state (to leading order) is given by
      \[
      \vy_i = \vy^* 
            +  \xi_i T \big[\vpi'(\vy^*) - \vdelta'(\vy^*)\big]^{-1}
	    \vpsi(\vy^*,\vy^*), \qquad i = 1,\ldots,n
      \]
      where the coefficients $\xi_i$ depend on the local properties of the cellular array,
      namely
      \[
       \xi_i = 1-\sum_{j \in \neigh_i} 
       \frac{1}{\vert \neigh_j \vert}.
      \]
  \end{enumerate}
\end{lem}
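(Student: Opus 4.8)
The plan is to treat the inhomogeneous steady state as a regular perturbation of the homogeneous configuration $\vy_i = \vys$ in the small parameter $T$. The single computation that drives both assertions is the evaluation of the active transport term at the flat state. Substituting $\vy_k = \vys$ for all $k$ into the factorisation~\eqref{eq:factorization} and using that $\vphi(\vys)\neq 0$ componentwise, the denominator collapses to $\sum_{k \in \neigh_i}\vphi(\vys) = \vert \neigh_i\vert\,\vphi(\vys)$, so the Hadamard division gives $\vnu_{ij}\big\vert_{\textrm{hom}} = \vpsi(\vys,\vys)/\vert\neigh_i\vert$, which is independent of $j$; likewise $\vnu_{ji}\big\vert_{\textrm{hom}} = \vpsi(\vys,\vys)/\vert\neigh_j\vert$. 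Summing the difference over $j \in \neigh_i$ and using $\sum_{j\in\neigh_i}\vert\neigh_i\vert^{-1}=1$ then yields
\[
\sum_{j\in\neigh_i}\big(\vnu_{ji}-\vnu_{ij}\big)\big\vert_{\textrm{hom}} = \Bigg(\sum_{j\in\neigh_i}\frac{1}{\vert\neigh_j\vert}-1\Bigg)\vpsi(\vys,\vys) = -\xi_i\,\vpsi(\vys,\vys),
\]
so the geometric coefficient $\xi_i$ appears automatically as the homogeneous transport residual.

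The first assertion is then immediate from this identity. If $T=0$ the transport term is absent and $\dot{\vy}_i = \vpi(\vys)-\vdelta(\vys)=\mathbf{0}$ by the defining property of $\vys$; if instead every cell has $\vert\neigh_i\vert=\vert\neigh^*\vert$, then $\xi_i = 1 - \vert\neigh_i\vert/\vert\neigh^*\vert = 0$, the displayed residual vanishes, and the flat state is again an equilibrium. This also isolates $-\xi_i\vpsi(\vys,\vys)$ as the nonzero forcing that destroys the homogeneous state precisely when the neighbour counts differ.

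For the second assertion I would posit $\vy_i = \vys + T\,\vy_i^{(1)} + \mathcal{O}(T^2)$ and insert it into the steady-state form of~\eqref{eq:concBasedModel}. Expanding the smooth reaction part about $\vys$ and using $\vpi(\vys)=\vdelta(\vys)$ gives $\vpi(\vy_i)-\vdelta(\vy_i) = T\big[\vpi'(\vys)-\vdelta'(\vys)\big]\vy_i^{(1)}+\mathcal{O}(T^2)$, while the transport sum is already pre-multiplied by $T$, so only its homogeneous value (computed above) enters at first order. The $\mathcal{O}(T)$ balance is therefore
\[
\big[\vpi'(\vys)-\vdelta'(\vys)\big]\vy_i^{(1)} - \xi_i\,\vpsi(\vys,\vys) = \mathbf{0}, \qquad i=1,\ldots,n,
\]
and inverting the nonsingular Jacobian $\vpi'(\vys)-\vdelta'(\vys)$ gives $\vy_i^{(1)} = \xi_i\big[\vpi'(\vys)-\vdelta'(\vys)\big]^{-1}\vpsi(\vys,\vys)$, exactly the claimed leading-order profile.

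The main obstacle is conceptual rather than computational: one must recognise that, because the derivative of the transport functions contributes only at $\mathcal{O}(T^2)$, the first-order problem \emph{decouples} cell by cell, so no inversion of the full coupled $nm\times nm$ system is required. To promote the formal expansion to a genuine solution branch I would apply the implicit function theorem to the steady-state map at $(\vys,\ldots,\vys;\,T=0)$: the factor $T$ multiplying the transport term forces its $\vy$-derivative to vanish there, so the full Jacobian is block diagonal with blocks $\vpi'(\vys)-\vdelta'(\vys)$ and is invertible exactly under the stated hypothesis. This delivers a unique smooth family $\vy(T)$ through the homogeneous state, whose derivative at $T=0$ recovers the coefficients $\vy_i^{(1)}$ and makes the phrase ``to leading order'' precise.
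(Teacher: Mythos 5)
Your proof is correct and follows essentially the same route as the paper's: evaluate the transport residual at the homogeneous state to obtain $-\xi_i\,\vpsi(\vys,\vys)$, deduce part 1 from its vanishing, and then balance at $\mathcal{O}(T)$ to get the decoupled cell-by-cell linear systems for the corrections. Your closing implicit-function-theorem remark is a small rigorous extra that the paper omits, but it does not change the argument.
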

\begin{proof}
  If $T=0$ the statement is clearly true, so henceforth we assume $T\neq 0$.
  Since $\vpi(\vys) -
  \vdelta(\vys) = \mathbf{0}$, the right-hand side of~\eqref{eq:concBasedModel}
  vanishes for all $i$ if $\vert \neigh_i \vert = \vert \neigh_j \vert = \vert
  \neigh^* \vert$. On the other hand, if not all cells have the same number of
  neighbours and $T$ is small, we may seek for asymptotic steady states in the form
  $\vy_i = \vy^* + T \veta_i +
  \mathcal{O}(T^2)$ for $i=1,\dots,n$ and $(\veta_i)_j = \mathcal{O}(1)$. A
  Taylor expansion of the right-hand side around $(\vy^*,\ldots,\vy^*)^\textnormal{T}
  \in \R^{nm}$ gives, to leading order,
  \begin{equation}
  \mathbf{0} = 
  \vpi(\vy^*) - \vdelta(\vy^*) + T [\vpi'(\vy^*) - \vdelta'(\vy^*)]
  \veta_i +  T \sum_{j \in \neigh_i} \vnu_{ji}(\vy^*,\ldots,\vy^*) - \vnu_{ij}(\vy^*,\ldots,\vy^*),
  \label{eq:firstOrder}
  \end{equation}
  In order to find an expression for $\veta_i$, we evaluate the sums in
  \eqref{eq:firstOrder}:
  \begin{equation}
  \begin{split}
    \sum_{j \in \neigh_i} & \vnu_{ji}(\vy^*,\ldots,\vy^*) - \vnu_{ij}(\vy^*,\ldots,\vy^*)  \\       
    & = \sum_{j \in \neigh_i} \bigg(
                  \vpsi(\vy^*,\vy^*) \odot \vphi(\vy^*) \oslash \sum_{k \in \neigh_j} \vphi (\vy^*) 
                - \vpsi(\vy^*,\vy^*) \odot \vphi(\vy^*) \oslash \sum_{k \in \neigh_i} \vphi (\vy^*) 
		              \bigg) \\
    & = \vpsi(\vy^*,\vy^*) \sum_{j \in \neigh_i} 
    \bigg( 
    \frac{1}{\vert \neigh_j \vert} - \frac{1}{\vert \neigh_i \vert}
    \bigg), \\
    & = -\xi_i \vpsi(\vy^*,\vy^*)
  \end{split}
  \label{eq:geometry}
  \end{equation}
  and combining~\eqref{eq:firstOrder} with~\eqref{eq:geometry} we obtain the assert.
\end{proof}
\begin{rem}[Small-amplitude peak solutions]
  \label{rem:smallAmp}
  In finite regular arrays, cells in the interior have all the same number of
  neighbours, so we can use these properties to give formal definitions of
  interior and boundary sets
  \begin{align*}
  \mathcal{I} & = \left\{ i \in\left\{1,\ldots,n\right\} \left| 1 - \sum_{j \in \neigh_i}
    \frac{1}{\vert \neigh_j \vert} = 0 \right.\right\}, \\
    \mathcal{B} & = \left\{1,\ldots,n\right\} \setminus \mathcal{I}.
  \end{align*}
  In passing, we note that $\mathcal{B}$ contains in general more cells than the
  physical boundary. Lemma~\ref{lemma1} shows that, to leading order, steady states
  for small $T$ deviate from the homogeneous solutions only in $\mathcal{B}$, where
  auxin peaks and dips are proportional to $T$ and $\xi_i$, namely
  \begin{equation}
  \vy_i = 
  \begin{cases}
    \vy^* & \text{if $i \in \mathcal{I}$,} \\
    \vy^* +  \xi_i T \big[\vpi'(\vy^*) - \vdelta'(\vy^*)\big]^{-1}
    \vpsi(\vy^*,\vy^*) & \text{if $i \in \mathcal{B}$.} \\
  \end{cases}
  \label{eq:split}
  \end{equation}
\end{rem}
\begin{rem}[Irregular domains]
  Lemma 2.1 can not be applied in general if the domain is irregular, that is,
  if cells have different volumes and contact lengths: if, say, the active transport
  function $\vnu_{ij}$ depends explicitly on the cellular volume $V_i$ and the $V_i$
  are not all equal, then it is not possible to express $\vnu_{ij}$ as
  in~\eqref{eq:factorization}. However the theory can be extended to the case of
  irregular domains. We do not report this derivation here, but note that it
  features, as expected, cellular volumes and contact areas.  
\end{rem}

\subsubsection{One-dimensional domain and one component per cell}\label{sec:one-dimensional}
As an example, we consider the Smith model~\cite{Smith2006} with constant fixed PIN1
amount, posed on a one-dimensional array of identical cells with volume
$V$, Neumann boundary conditions at $i=1$ and free boundary conditions at $i=n$. The
Neumann boundary conditions are obtained by considering
ghost cells \cite{Draelants2013}, therefore boundary and interior sets are given by
$\mathcal{I} =
{1,\ldots,n-2}$ and $\mathcal{B} = \{n-1,n\}$, respectively. 
Furthermore we denote by $p$ the fixed PIN concentration and apply
Lemma~\ref{lemma1} with $m=1$, $y_i = a_i$ and
\begin{align*}
  & \pi \colon a \mapsto \frac{\rho_{_{\IAA}}}{1+\kappa_{_{\IAA}}a},
  & \delta \colon a \mapsto - \mu_{\IAA} a, \\
  & \psi \colon (a,b) \mapsto \frac{p}{V} \frac{a^2}{1+\kappa_T b^2},
  & \varphi \colon a \mapsto \exp(c_1 a),
\end{align*}
where all parameters are assumed to be strictly positive. By balancing production
and decay terms we find the positive homogeneous state
\[
a^* = \frac{-1+\sqrt{1+4\kappa_\IAA \rho_\IAA/\mu_\IAA}}{2\kappa_\IAA}.
\]
In the absence of active transport, $a^*$ is a stable steady state of the model since
$\pi'(a^*) - \delta'(a^*)<0$. For $0 \neq T \ll 1\, \mu\textrm{m}^3/\textrm{h}$,
$\vas$ is not a steady state since $\mathcal{B}$ is nonempty and
$\xi_{n-1} = -1/2$, $\xi_n = 1/2$, hence we obtain, to leading order
\[
a_i = 
\begin{cases}
  a^*  & \text{for $i = 1 \ldots n-2$,} \\[1em]
  a^* + 
  T\dfrac{p}{2V}
  \bigg[
  \dfrac{\rho_\IAA \kappa_\IAA}{1 + \kappa_\IAA (a^*)^2}
  + \mu_\IAA
  \bigg]^{-1}
  \dfrac{(a^*)^2}{1 + \kappa_T (a^*)^2}
  & \text{for $i = n-1$,} \\[1em]
  a^* 
  -T\dfrac{p}{2V}
  \bigg[
  \dfrac{\rho_\IAA \kappa_\IAA}{1 + \kappa_\IAA (a^*)^2}
  + \mu_\IAA
  \bigg]^{-1}
  \dfrac{(a^*)^2}{1 + \kappa_T (a^*)^2}
  & \text{for $i = n$.}
\end{cases}
\]

\subsubsection{One-dimensional domain and two components per cell}\label{sec:one-dimensional-2-comp}
The Smith model~\cite{Smith2006} features 2 ODEs per cell. If we pose this model on a
one-dimensional array of identical cells, the graph $G$ associated to the nodes
is the same as in our previous example, hence $\mathcal{B}$, $\mathcal{I}$ and $\vxi$
are unchanged. We can now apply Lemma~\ref{lemma1} with $m=2$, $\vy_i
=(a_i,p_i)^\textrm{T}$ and
    \begin{align*}
      & \vpi \colon 
	\begin{bmatrix} a \\ p \end{bmatrix}
	\mapsto 
	\begin{bmatrix} 
	\dfrac{\rho_\IAA}{1+\kappa_{\IAA}a} \\[1em] 
	\dfrac{\rho_{\PIN_0} + \rho_\PIN a}{1+\kappa_\PIN p} \\ 
	\end{bmatrix}, 
      & & \vdelta \colon 
	\begin{bmatrix} a \\ p \end{bmatrix}
	\mapsto 
	\begin{bmatrix} 
	- \mu_{\IAA} a \\
	- \mu_{\PIN} p \\
	\end{bmatrix},  
      \\
      & \vpsi \colon 
	\bigg(
	\begin{bmatrix} a \\ p \end{bmatrix},
	\begin{bmatrix} b \\ q \end{bmatrix}
	\bigg)
	\mapsto 
	\begin{bmatrix} 
	\dfrac{p}{V} \dfrac{a^2}{1+\kappa_T b^2} \\[1em]
	0
	\end{bmatrix},
      & & \vphi \colon 
      \begin{bmatrix} a \\ p \end{bmatrix}
      \mapsto 
      \begin{bmatrix} 
      \exp(c_1  a) \\
      0
      \end{bmatrix}.
    \end{align*}

Balancing production and decay terms we find a homogeneous strictly positive steady
state for $T=0$
\[
\vy^* = 
\begin{bmatrix} a^* \\ b^* \end{bmatrix} = 
\begin{bmatrix}
  \dfrac{-1+\sqrt{1+4\kappa_\IAA \rho_\IAA/\mu_\IAA}}{2\kappa_\IAA} \\[1em]
  \dfrac{-1+\sqrt{1+4\kappa_\PIN (\rho_{\PIN_0} + \rho_\PIN a^*)/\mu_\PIN}}{2\kappa_\PIN} \\
\end{bmatrix}
\]
which is stable since
\[
\spec( \vpi'(\vy^*)- \vdelta'(\vy^*) ) = 
\left\{
-\frac{\rho_\IAA \kappa_\IAA}{(1+\kappa_\IAA a^*)^2} - \mu_\IAA,
\;
-\frac{(\rho_{\PIN_0} + \rho_\PIN a^*) \kappa_\PIN}{(1+\kappa_\PIN a^*)^2} - \mu_\PIN
\right\}
\]
Since the parameters are assumed to be positive with the exception of
$\rho_{\PIN_0}$ which is nonnegative (see also Table ~\ref{table:parameters} )
we do not have a zero eigenvalue.

The inverse of $\vpi'(\vy^*)- \vdelta'(\vy^*)$ can be computed explicitly and for
$T \ll 1 \, \mu\textrm{m}^3/\textrm{h}$ we obtain to leading order
\[
a_i = 
\begin{cases}
  a^*  & \text{for $i = 1 \ldots n-2$,} \\[1em]
  a^* + 
  T \dfrac{p}{2V}
  \bigg[
  \dfrac{\rho_\IAA \kappa_\IAA}{1 + \kappa_\IAA (a^*)^2}
  + \mu_\IAA
  \bigg]^{-1}
  \dfrac{(a^*)^2}{1 + \kappa_T (a^*)^2}
  & \text{for $i = n-1$,} \\[1em]
  a^* 
  -T \dfrac{p}{2V}
  \bigg[
  \dfrac{\rho_\IAA \kappa_\IAA}{1 + \kappa_\IAA (a^*)^2}
  + \mu_\IAA
  \bigg]^{-1}
  \dfrac{(a^*)^2}{1 + \kappa_T (a^*)^2}
  & \text{for $i = n$,}
\end{cases}
\]
\[
p_i = 
\begin{cases}
  p^*  & \text{for $i = 1 \ldots n-2$,} \\[2.5em]
  p^* + 
  T\dfrac{p}{2V}
  \dfrac{
  \bigg[
    \dfrac{\rho_\PIN}{1 + \kappa_\PIN p^*}
  \bigg]
  \bigg[
  \dfrac{(a^*)^2}{1 + \kappa_T (a^*)^2}
  \bigg]
        }{
  \bigg[
    \dfrac{\rho_\IAA \kappa_\IAA}{(1 + \kappa_\IAA a^*)^2} + \mu_\IAA
  \bigg]
  \bigg[
    \dfrac{(\rho_{\PIN_0} + \rho_\PIN a^*) \kappa_\PIN}{(1 + \kappa_\PIN p^*)^2} + \mu_\PIN
  \bigg]
	}
  & \text{for $i = n-1$,} \\[2.5em]
  p^* - 
  T\dfrac{p}{2V}
  \dfrac{
  \bigg[
    \dfrac{\rho_\PIN}{1 + \kappa_\PIN p^*}
  \bigg]
  \bigg[
  \dfrac{(a^*)^2}{1 + \kappa_T (a^*)^2}
  \bigg]
        }{
  \bigg[
    \dfrac{\rho_\IAA \kappa_\IAA}{(1 + \kappa_\IAA a^*)^2} + \mu_\IAA
  \bigg]
  \bigg[
    \dfrac{(\rho_{\PIN_0} + \rho_\PIN a^*) \kappa_\PIN}{(1 + \kappa_\PIN p^*)^2} + \mu_\PIN
  \bigg]
	}
  & \text{for $i = n$.}
\end{cases}
\]

Another example of a concentration based transport model that features 2
ODEs per cell is the Chitwood model~\cite{Chitwood2012}. 
The model is given by 
    \begin{align*}
      & \vpi \colon 
	\begin{bmatrix} a \\ p \end{bmatrix}
	\mapsto 
	\begin{bmatrix} 
	\dfrac{\rho_\IAA}{1+\kappa_{\IAA}a} \\[1em] 
	\dfrac{\rho_{\PIN_0} + \rho_\PIN a}{1+\kappa_\PIN p} \\ 
	\end{bmatrix}, 
      & & \vdelta \colon 
	\begin{bmatrix} a \\ p \end{bmatrix}
	\mapsto 
	\begin{bmatrix} 
	- \mu_{\IAA} a \\
	- \mu_{\PIN} p \\
	\end{bmatrix},  
      \\
      & \vpsi \colon 
	\bigg(
	\begin{bmatrix} a \\ p \end{bmatrix},
	\begin{bmatrix} b \\ q \end{bmatrix}
	\bigg)
	\mapsto 
	\begin{bmatrix} 
	\dfrac{p}{V} \dfrac{\exp{c_2\ a} - 1 }{\exp{c_2\ b}} \\[1em]
	0
	\end{bmatrix},
      & & \vphi \colon 
      \begin{bmatrix} a \\ p \end{bmatrix}
      \mapsto 
      \begin{bmatrix} 
      \exp(c_1 a) \\
      0
      \end{bmatrix}.
    \end{align*}
For the same domain as above and with $m=2$ and $\vy_i
=(a_i,p_i)^\textrm{T}$ we can apply Lemma~\ref{lemma1}. 

The homogeneous steady state for $T=0$ is the same as for the Smith
model and for $T \ll 1 \, \mu\textrm{m}^3/\textrm{h}$ we obtain to leading order
\[
a_i = 
\begin{cases}
  a^*  & \text{for $i = 1 \ldots n-2$,} \\[1em]
  a^* + 
  T \dfrac{p}{2V}
  \bigg[
  \dfrac{\rho_\IAA \kappa_\IAA}{1 + \kappa_\IAA (a^*)^2}
  + \mu_\IAA
  \bigg]^{-1}
  \dfrac{\exp{(c_2 \ a^*)} -1}{\exp{(c_2 \ a^*)}}
  & \text{for $i = n-1$,} \\[1em]
  a^* 
  -T \dfrac{p}{2V}
  \bigg[
  \dfrac{\rho_\IAA \kappa_\IAA}{1 + \kappa_\IAA (a^*)^2}
  + \mu_\IAA
  \bigg]^{-1}
  \dfrac{\exp{(c_2 \ a^*)} -1}{\exp{(c_2 \ a^*)}}
  & \text{for $i = n$,}
\end{cases}
\]
\[
p_i = 
\begin{cases}
  p^*  & \text{for $i = 1 \ldots n-2$,} \\[2.5em]
  p^* + 
  T \dfrac{p}{2V}
  \dfrac{
  \bigg[
    \dfrac{\rho_\PIN}{1 + \kappa_\PIN p^*}
  \bigg]
  \bigg[
  \dfrac{(a^*)^2}{1 + \kappa_T (a^*)^2}
  \bigg]
        }{
  \bigg[
    \dfrac{\rho_\IAA \kappa_\IAA}{(1 + \kappa_\IAA a^*)^2} + \mu_\IAA
  \bigg]
  \bigg[
    \dfrac{(\rho_{\PIN_0} + \rho_\PIN a^*) \kappa_\PIN}{(1 + \kappa_\PIN p^*)^2} + \mu_\PIN
  \bigg]
	}
  & \text{for $i = n-1$,} \\[2.5em]
  p^* - 
  T \dfrac{p}{2V}
  \dfrac{
  \bigg[
    \dfrac{\rho_\PIN}{1 + \kappa_\PIN p^*}
  \bigg]
  \bigg[
  \dfrac{(a^*)^2}{1 + \kappa_T (a^*)^2}
  \bigg]
        }{
  \bigg[
    \dfrac{\rho_\IAA \kappa_\IAA}{(1 + \kappa_\IAA a^*)^2} + \mu_\IAA
  \bigg]
  \bigg[
    \dfrac{(\rho_{\PIN_0} + \rho_\PIN a^*) \kappa_\PIN}{(1 + \kappa_\PIN p^*)^2} + \mu_\PIN
  \bigg]
	}
  & \text{for $i = n$.}
\end{cases}
\]

\subsubsection{Two-dimensional domain of identical hexagonal cells}
Lemma~\ref{lemma1} also applies when the Smith or the Chitwood model
are posed on a two-dimensional array of identical hexagonal cells. In this case,
the computations for $\vy^*$ are identical to the previous example and the
asymptotic derivation is also straightforward. Instead of writing the full
expressions for $\mathcal{B}$, $\mathcal{I}$ and $\vxi$, we refer the reader to
Figure~\ref{fig:xiHexagons}, where the values of $\vxi$ are shown for two
corners of the domain. As claimed in Section \ref{subsec:TwoDimensionalDomain}, the highest peaks occur at
the top-left and right-bottom corners.

\subsection{Models with diffusion}
\label{subsec:modelsWithDiffusion}
We can extend the definition of concentration-based models to the case where
diffusion is present.
\begin{dfn}[Concentration-based model with diffusion]
 A concentration-based model with diffusion is a set of $mn$ ODEs of the form
\begin{equation}
  \dot{\vy}_i = \vpi(\vy_i) - \vdelta(\vy_i) + \mD \sum_{j \in \neigh_i} (\vy_j - \vy_i)
  + T \sum_{j \in \neigh_i} \vnu_{ji}(\vy_1,\ldots,\vy_n) - \vnu_{ij}(\vy_1,\ldots,\vy_n),
  \label{eq:concBasedModelDiff}
\end{equation}
for $i = 1,\ldots,n$, where $\mD \in \R^{m \times m}$ is a diagonal diffusion matrix and
all other quantities are as in Definition~\ref{def:conBasedModelNoDiff}
\end{dfn}
\begin{rem} Reasoning like in Lemma~\ref{lemma1}, we obtain $\vy_i = \vy^* + T
  \veta_i + \mathcal{O}(T^2)$ where $\veta_i$ satisfy
\[
\left[
\bm{J(\vys)}
+ \bm{L} \otimes \mD  \right]
\begin{bmatrix}
\veta_1 \\
\vdots \\
\veta_n
\end{bmatrix}
= 
\begin{bmatrix}
\xi_1 \vpsi(\vy^*,\vy^*) \\
\vdots \\
\xi_n \vpsi(\vy^*,\vy^*)
\end{bmatrix},
\]
$\bm{J}(\vy) \in \R^{mn \times mn}$ is block-diagonal with blocks
$\vpi'(\vy^*) - \vdelta'(\vy^*)$, $\bm{L} \in \R^{n \times n}$ is the Laplacian
matrix associated to the graph $G$  with Neumann boundary conditions and $\otimes$ denotes the Kronecker product between matrices.
The operator $\bm{L} \otimes \mD$ is negative
semi-definite and it has a zero eigenvalue, corresponding to a constant eigenvector.
However, summing this matrix to $\bm{J}(\vy) = \vpi'(\vy^*) - \vdelta'(\vy^*)$,
makes the resulting linear operator non-singular.
\end{rem}

In the presence of diffusion we can not directly apply the formula~\eqref{eq:split},
even for regular cellular arrays: owing to diffusion, cells in $\mathcal{I}$
will also deviate from the homogeneous state, hence peaks and dips are not
necessarily formed within $\mathcal{B}$, but may occur in interior cells that are
close to the boundary. First-order corrections for these cases can be computed
analytically using Chebyshev polynomials~\cite{Arfken1985} or numerically using
linear algebra routines. Even though we report below an example of this calculation,
we point out that in practice this is not necessary, since the numerical bifurcation
software gives access to the full nonlinear solution and to its linear stability.

\subsubsection{One-dimensional domain with diffusion and two components per
  cell}\label{sec:two-components-diffusion}
We return to the Smith model with $m=2$, posed on a row of identical cells, 
and we now add diffusion only in the auxin component.

The expressions for $a^*$ and $p^*$ are unchanged from
Section~\ref{sec:one-dimensional-2-comp}, as is the first order approximation of
$p_i$ (since there is no diffusion for $p$). Expressions for the first-order
approximations in $a_i$ are more
involved: proceeding as explained above for generic models with diffusion, we
obtain, to leading order
\begin{eqnarray}
%%%%%%%%%%%%%%%%%%%%%%%%%%%%%%%%%%
\left[\left(\dfrac{\rho_{_{\IAA}}\kappa_{_{\IAA}}}{\left(1+\kappa_{_{\IAA}}a^*\right)^2} + \mu_{_{\IAA}}\right)
%%%%%%%%%%%%%%%%%%%%%%%%%%%%%%%%%%
\left(
	\begin{array}{ccccc} 
				1 & 0 		 & \ldots & \ldots & 0  \\
				0 & 1 		 & 0 		  & \ldots & 0  \\
					&				 & \ddots & 			 &	 	\\
					&				 &				&	\ddots &	 	\\
				0 & \ldots & \ldots & 0			 & 1
	\end{array}
\right)
+ \dfrac{D}{V}
%%%%%%%%%%%%%%%%%%%%%%%%%%%%%%%%%%
\left(
	\begin{array}{ccccc} 
				-1 & 1 		   & 				& 			 &   \\
				1  & -2 		 & 1 		  & 			 &   \\
					 &	\ddots & \ddots & \ddots &	 	\\
					 &				 &		1		&	-2 		 & 1 	\\
				   & 				 & 				& 1			 & -1
	\end{array}
\right)
\right] \nonumber \\
%%%%%%%%%%%%%%%%%%%%%%%%%%%%%%%%%%
\cdot\left(\begin{array}{c} \eta_1 \\ \eta_2 \\ \vdots \\ \vdots \\ \eta_n \end{array}\right)
%%%%%%%%%%%%%%%%%%%%%%%%%%%%%%%%%%
=
%%%%%%%%%%%%%%%%%%%%%%%%%%%%%%%%%%
\left(\begin{array}{c} \xi_1  \psi\left(\vas,\vps\right) \\ \xi_2\psi_2\left(\vas,\vps\right) \\ \vdots \\ \vdots \\ \xi_n \psi_n\left(\vas,\vps\right) \end{array}\right)
%%%%%%%%%%%%%%%%%%%%%%%%%%%%%%%%%%
=
%%%%%%%%%%%%%%%%%%%%%%%%%%%%%%%%%%
\left(\begin{array}{c} 0 \\ \vdots \\ 0 \\
  \frac{1}{2}\psi_{n-1}\left(\vas,\vps\right) \\
  -\frac{1}{2}\psi_n\left(\vas,\vps\right) \end{array}\right).
\label{eq:linsystemDiffusion}
\end{eqnarray}

Solving this linear equation above led to the approximate solution
profiles in Figure~\ref{fig:FiniteDomainWithDiffusion} and the red 
solution branch in Figure~\ref{fig:analyticVsNumericalBD}.
The same derivation and figures can be obtained for the Chitwood model
(not shown).

\section*{Acknowledgments}
We acknowledge fruitful discussions with Gerrit T.S. Beemster, Jan Broeckhove, Dirk De Vos, Abdiravuf 
Dzhurakhalov, Etienne Farcot and Przemyslaw Klosiewicz.
DD acknowledges financial support from the Department of Mathematics
and Computer Science of the University of Antwerp. DA acknowledges the University of
Nottingham Research Development Fund, supported by the Engineering and Physical
Sciences Research Council (EPSRC). 

%\section*{References}
%\bibliography{refs}

\end{document}